\definecolor{MyLinkColor}{rgb}{0,0,0.4}
\newcommand{\R}{{\mathbb R}}
\newcommand{\N}{{\mathbb N}}
\newcommand{\s}{\mathbb S}
\newcommand{\W}{\mathcal{W}}
\newcommand{\U}{\mathcal{U}}
\newcommand{\cS}{\mathcal{S}}
\newcommand{\cC}{\mathcal{C}}
\newcommand{\re}{\mathop{\rm Re}\nolimits}
\newcommand{\sign}{\mathop{\rm sign}\nolimits}
\newcommand{\ov}{\overline}
\newcommand{\p}{\partial}
\newcommand{\0}{\Omega}
\newcommand{\G}{\Gamma}
\newcommand{\e}{\varepsilon}
\newcommand{\tet}{\theta_{\gamma,\alpha}}
\newtheorem{thm}{Theorem}[section]
\newtheorem{prop}[thm]{Proposition}
\newtheorem{lemma}[thm]{Lemma}
\newtheorem{cor}[thm]{Corollary}
\theoremstyle{remark} 
\newtheorem{rem}[thm]{Remark}
\numberwithin{equation}{section}
\title[Steady-state fingering patterns]{Steady-state fingering patterns for a periodic Muskat problem}
\subjclass[2000]{34A12; 34C23; 34C25; 70K42}
\keywords{Muskat problem, Fingering patterns, Existence, Steady-state solutions, Periodic solutions}
\author[M. Ehrnstr\"om]{Mats Ehrnstr\"om}
\address{Department of Mathematical Sciences, Norwegian University of Science and Technology, 7491 Trondheim, Norway}
\email{mats.ehrnstrom@math.ntnu.no}
\author[J. Escher]{Joachim Escher}
\author[B.--V. Matioc]{Bogdan--Vasile Matioc}
\address{Institut f{\"u}r Angewandte Mathematik, Leibniz Universit{\"a}t Hannover, Welfengarten~1, 30167 Hannover, Germany. }
\email{escher@ifam.uni-hannover.de}
\email{matioc@ifam.uni-hannover.de}
\begin{document}

\begin{abstract}
We study global bifurcation branches consisting of stationary solutions of the 
Muskat problem. It is proved that the steady-state fingering patterns blow up as the surface tension increases: we find a threshold value for the cell height with the property that 
below this value the fingers will touch the boundaries of the cell when the surface tension approaches a finite value from below; otherwise, the maximal slope of the fingers tends to infinity.
\end{abstract}

\maketitle

%%% SECTION: INTRO %%%
\section{Introduction}\label{sec:intro}
Proposed in $1934$ by Muskat (cf. \cite{M}), the Muskat problem describes the evolution of the interface
 between to immiscible fluids in a porous medium. 
In the recent investigation \cite{EM12} this problem was studied in a new, periodic, setting incorporating gravity, viscosity, and surface tension effects. 
The current note aims at an in-depth description of the stationary solutions found in that work. 

When a heavier viscous fluid rests upon a lighter one, the interface between them is in general not stable; depending on the different densities, and the surface tension, one expects the upper fluid to, at least partially, sink into the lower one, and vice versa. 
Due to their resemblance to an outstretched hand reaching into a viscous fluid, the resulting shapes are often referred to as \emph{fingering patterns}. 
The investigation of such, in different settings, has brought a 
lot of attention (see, e.g., the pioneering paper \cite{MR0097227} 
and the later investigations \cite{MR728709,MS1981,MR1462048,MR877015}.

In \cite{ EM12} smooth branches of stationary, i.e. time-independent, solutions of the Muskat problem were found.
They  are periodic solutions of the Laplace-Young equation under a volume constrain (see \eqref{eq:mean}).
The Laplace-Young equation is also known as the capillarity equation and, subjected to boundary constrains,  has been studied
by many authors (see \cite{RF} and the literature therein).
 
The  solutions we found are all even, but only in a small neighbourhood of the trivial solutions can one via linearisation 
obtain an approximate picture of the fingering patterns. This is due to the fact that global bifurcation theorems 
are inherently implicit in nature, and thus have the drawback of not disclosing the behaviour of the bifurcation 
branches away from the bifurcation point. In our present work, we therefore take advantage of the theory for ordinary 
differential equations and certain symmetry properties of the solutions to give a precise description of the solutions 
found in \cite{EM12}: we show that each global bifurcation branch consists entirely of 
steady-state solutions of minimal period $2\pi/l$, $l \in \N$, and that the symmetric fingers described by the interface i) either approach the 
bottom and the upper boundary of the cell, or ii) display blow-up in the $C^1-$norm, while 
the surface tension coefficient tends from below to a finite value.   

The plan is as follows. In Section~\ref{sec:prel} we give the
 necessary mathematical background of the problem, and show that,
 for stationary solutions, it may be reduced to an ordinary differential equation 
with an additional non-local constraint. 
The proof of the main result mentioned above 
is based on the study of the odd solutions of this equation, and the  one-to-one correspondence between the odd and the even solutions thereof. 
This is done in the Section~\ref{sec:odd}, and there we also show that there exist infinitely 
many global bifurcation branches consisting of odd solutions of the problem. In addition, we describe the behaviour 
of the steady fingers away from the set of trivial solutions.
Finally, it is interesting to see that  the steady-state fingering patterns we obtained correspond to certain 
solutions of the mathematical pendulum.  
This correspondence is shown in the Appendix.

\begin{figure}
\includegraphics[clip=true, angle=0,  trim = 0 -10 0 0, width=0.4\linewidth]{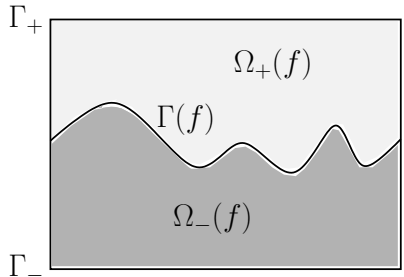}
\caption{\small The periodic and vertical Hele-Shaw cell.}
\label{Figure1}
\end{figure}

%%% SETION: PREL %%%
\section{Preliminaries}\label{sec:prel}
Let $h > 0$, and consider a periodic medium occupying $\s \times [-h,h]$, with $\s$ denoting the unit circle.
 The bottom  of this cell is assumed to be impermeable, 
and the pressure on the upper boundary is constantly set to zero. 
For a function $f$ with $\|f\|_{C(\s)}<h$, let
\[
\Gamma(f(t)):=\{(x,f(t,x))\,:\,x\in\s\},
\]
be the time-dependent interface separating the wetting phases, and $\Gamma_{\pm}:=\s\times\{\pm h\} $ the bottom and
 the upper boundary of the cell (see Figure~\ref{Figure1}). We define the fluid domains
\begin{align*}
\0_-(f(t))&:=\{(x,y)\,:\, -h<y<f(t,x)\},\\[1ex]
\0_+(f(t))&:=\{(x,y)\,:\, f(t,x)<y<h\},
\end{align*}
and write 
\[
\kappa_{\Gamma(f)} := \frac{f_{xx}}{\left(1+{f_x^2}\right)^{3/2}} 
\]
for the signed curvature of the graph $\Gamma(f)$. The mathematical model can then be stated as a two-phase moving-boundary problem,
\begin{equation}\label{eq:S}
\left\{
\begin{array}{rlllll}
\Delta u_\pm&=&0&\text{in}& \Omega_\pm(f(t)), \\[2ex]
\p_\nu u_+&=&g_1&\text{on}& \Gamma_+,\\[2ex]
 u_-&=&g_2&\text{on}&\Gamma_{-},\\[2ex]
u_+-u_-&=&\gamma\kappa_{\G(f)}+g(\varrho_+-\varrho_-)  f&\text{on}& \Gamma(f(t)),\\[2ex]
\p_tf&=&-\displaystyle{\frac{\sqrt{1+f_x^2}}{\mu_\pm}\p_\nu u_\pm}&\text{on}& \Gamma(f(t)),\\[2ex]
f(0)&=&f_0, &
\end{array}
\right. 
\end{equation} 
with $t\in[0,T],$ where we use the subscripts $\pm$ to denote the upper and lower fluids, respectively. As  conventional, $g$ stands for the gravitational constant of acceleration, $\gamma$ denotes the surface tension at the interface $\Gamma(f)$, and $\varrho_\pm$ and $\mu_{\pm}$ are the densities and viscosities of the two fluids, respectively, all of which are supposed to be given positive constants. 
Physically, the potentials $u_\pm$ are defined by the relation 
\[
u_\pm:=p_\pm+g\varrho_\pm y,
\] 
where $p$ stands for pressure, and $y$ is the height coordinate.   
Furthermore, the functions $g_1$ and $g_2$ are assumed to be known 
\[
\text{$g_1\in C([0,T], h^{1+\alpha}(\mathbb{S})) $ and $g_2\in C([0,T], h^{2+\alpha}(\mathbb{S})) $.}
\]
Given $m\in\N$ and $\alpha\in(0,1),$ the small H\"older spaces
 $h^{m+\alpha}(\s)$ stand for the completions of the class of smooth functions in the Banach spaces $C^{m+\alpha}(\s).$

The problem consists of finding functions $f$ and $u_\pm$ satisfying \eqref{eq:S}, but 
it can be shown that this may be reduced to a parabolic problem with   $f$ as the single unknown  \cite{EM12}. 
Hence, we shall refer to the function $f$ parametrising the moving interface between the fluids as a solution of \eqref{eq:S}.

\subsection*{Well-posedness results}
It is shown in \cite{EM12} that the Muskat problem  is, at least in a neighbourhood of some flat interface, of parabolic type.
This observation is true, when considering surface tension effects, independently of the boundary data $g_1$ and $g_2$.
On the other hand, when neglecting surface tension, certain restrictions  must be imposed on the  boundary data to ensure parabolicity  of \eqref{eq:S}.
We then have (cf. \cite[Theorem 2.1]{EM12}):

%%% THEOREM: WELL-POSEDNESS %%%
\begin{thm}[Well-posedness] Let $\gamma\in[0,\infty)$, $c_1, c_2\in\mathbb{R}$, and assume that
\begin{equation}\label{eq:cond}
\gamma>0\qquad\text{or}\qquad g(\rho_+-\rho_-)+c_1\left(\frac{\mu_-}{\mu_+}-1\right)<0.
\end{equation}
Then there exist open  neighbourhoods of the zero function $\mathcal{O}_i\subset h^{i+\alpha}(\mathbb{S})$, $i\in\{1,2\},$ and $\mathcal{O}\subset h^{2+2\sign(\gamma)+\alpha}(\mathbb{S})$,
 such that for all 
 $f_0\in \mathcal{O}$ and $g_i\in C([0,\infty), c_i+\mathcal{O}_i),$  $i=1,2,$
there exists $T(f_0)\leq T$ and a unique maximal H\"older   solution $f$ of problem \eqref{eq:S} on  $[0,T(f_0))$ which fulfills $f(t)\in\mathcal{O}$ for all $t\in[0,T(f_0)).$

If  $\gamma>0$, then we may choose $\mathcal{O}_i=h^{i+\alpha}(\mathbb{S})$, $i\in\{1,2\}.$
\end{thm}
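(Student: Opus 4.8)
The plan is to follow the standard route for free boundary problems of Muskat type: eliminate the harmonic potentials $u_\pm$ in favour of the interface $f$, recast \eqref{eq:S} as a single nonlocal evolution equation for $f$, and then apply abstract parabolic theory in the small H\"older scale.

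First I would solve, for each admissible $f$ with $\|f\|_{C(\s)}<h$ and each pair of boundary data, the elliptic transmission problem for $u_\pm$: the equations $\Delta u_\pm=0$ in $\0_\pm(f)$ together with $\p_\nu u_+=g_1$ on $\G_+$, $u_-=g_2$ on $\G_-$, and the jump relation $u_+-u_-=\gamma\kappa_{\G(f)}+g(\varrho_+-\varrho_-)f$ on $\G(f)$ determine $u_\pm$ uniquely. The trace $\p_\nu u_\pm|_{\G(f)}$ then defines a Dirichlet--Neumann type operator acting on the data, and feeding it into the kinematic condition $\p_tf=-\frac{\sqrt{1+f_x^2}}{\mu_\pm}\p_\nu u_\pm$ yields an evolution problem
\[
\p_t f=\Phi(f),\qquad f(0)=f_0,
\]
where $\Phi$ is a (real analytic) nonlocal operator defined on an open neighbourhood of $0$ in $h^{\beta+\alpha}(\s)$ with values in $h^{1+\alpha}(\s)$, and $\beta:=2+2\sign(\gamma)$. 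The shift by $\beta-1=1+2\sign(\gamma)$ reflects that $\p_\nu$ behaves like a first-order operator on the interface, applied to the curvature term which is of second order in $f$ precisely when $\gamma>0$.

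The heart of the matter is the linearization $\p\Phi(0)$ at the flat interface. I would compute its Fourier symbol and verify that $-\p\Phi(0)$ is, in the language of continuous maximal regularity, the generator of an analytic semigroup in the interpolation couple $(h^{\beta+\alpha}(\s),h^{1+\alpha}(\s))$. When $\gamma>0$ the curvature contributes a leading symbol behaving like $-\gamma c\,|\xi|^3$ with $c>0$, so parabolicity holds for \emph{every} choice of boundary data; this is exactly what permits the final assertion that the neighbourhoods $\cO_i$ may be taken to be all of $h^{i+\alpha}(\s)$. When $\gamma=0$ the leading symbol is of first order and its sign is governed by $g(\varrho_+-\varrho_-)+c_1(\mu_-/\mu_+-1)$; condition \eqref{eq:cond} is precisely the requirement that this Rayleigh--Taylor quantity be negative, which is what renders the linearization parabolic.

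With the generation property established I would invoke the abstract theory of quasilinear parabolic equations in the small H\"older setting (the continuous maximal regularity results of Da Prato--Grisvard, Angenent and Lunardi). Combined with the smoothness of $\Phi$, the membership $-\p\Phi(0)\in\mathcal{H}(h^{\beta+\alpha}(\s),h^{1+\alpha}(\s))$ produces, for $f_0$ in a sufficiently small neighbourhood $\cO$ and $g_i\in C([0,\infty),c_i+\cO_i)$, a unique maximal H\"older solution on an interval $[0,T(f_0))$ remaining in $\cO$. The main obstacle, and the step requiring the most care, is the explicit analysis of the Dirichlet--Neumann operator: one must show that $\Phi$ is well defined and smooth between the stated H\"older spaces and, above all, identify correctly the principal symbol of $\p\Phi(0)$ together with the sign of its leading coefficient, since it is this sign --- controlled by \eqref{eq:cond} --- that separates the parabolic regime from the ill-posed one.
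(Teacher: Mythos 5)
This theorem is not proved in the paper at all: it is imported verbatim from \cite{EM12} (Theorem 2.1 there), with the surrounding text only indicating that \eqref{eq:S} reduces to a parabolic evolution equation for $f$ alone, parabolic near the flat interface precisely under condition \eqref{eq:cond}. Your outline --- elimination of $u_\pm$ via the transmission/Dirichlet--Neumann problem, symbol analysis of the linearisation (third order with good sign for $\gamma>0$, first order with sign governed by the Rayleigh--Taylor quantity for $\gamma=0$), and continuous maximal regularity in the small H\"older scale --- is exactly the strategy of that cited work, so it matches the paper's (referenced) approach; the only caveat is that your argument is a plan rather than a proof, deferring the substantive technical steps (smoothness of the nonlocal operator and the symbol computation) that \cite{EM12} carries out in detail.
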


Existence of classical solutions of the Muskat problem, and long-time existence for small initial data, can also be found 
 in \cite{FT, SCH, Y1, Yi2}. 
 The approach in \cite{EM12} yields structural insight 
into the character of the Muskat and it is  suitable for studying the  stability properties of the steady-state solutions  of problem \eqref{eq:S}.

\subsection*{Steady-state solutions}
In the remainder of this paper we assume that $g_1\equiv0 $ and $g_2\equiv {\rm const},$ meaning that the mass of both fluids is preserved in time,
and that the cell contains equal quantities of both fluids.
The steady-state solutions of \eqref{eq:S} are then solutions of the problem
\begin{equation}\label{eq:mean}
\gamma\frac{f''}{(1+f'^2)^{3/2}}+g(\varrho_+-\varrho_-)f=const, \quad\text{and}\quad \int_\s f\, dx=0.
\end{equation}
Indeed, since $f$ does not depend on time, it follows from uniqueness for the 
Dirichlet--Neumann problem that the potentials $u_+$ and $u_-$ are both constants also in the 
spatial variable, which yields the first equation of \eqref{eq:mean}. The second relation reflects the 
earlier mentioned assumption that the cell contains equal amounts of both fluids. By induction, we obtain 
\begin{rem}\label{R:1}
Any classical solution of \eqref{eq:mean} is smooth.
\end{rem}
We shall refer to the set 
\[
\Sigma:=\{(\gamma,0)\,:\, \gamma>0\} 
\]
as  being the trivial branch of solutions of \eqref{eq:mean}. Because of the integral constraint in \eqref{eq:mean}, the problem \eqref{eq:mean} is in general over-determined. One way to approach this difficulty is to determine solution pairs $(\gamma, f)\in(0,\infty)\times C^2(\s)$ of \eqref{eq:mean}, under the additional, but natural, requirement that $\|f\|_{C(\s)}<h$, meaning that the fingers do not touch the lower or upper boundaries of the cell. 

In the situation when the less dense fluid lies on the bottom of the cell, i.e. when $\varrho_+ > \varrho_-$,
we find---using the theorem on bifurcation from simple eigenvalues due to Crandall and  Rabinowitz \cite[Theorem 1.7]{CR}, and the global bifurcation theorem due to Rabinowitz \cite[Theorem II.3.3]{HK}---global bifurcation branches consisting of even, stationary, finger-shaped solutions of \eqref{eq:mean}.
More precisely, if $C^{3+\alpha}_{0,e}(\s)$ denotes the subspace of  $C^{3+\alpha}(\s)$ 
consisting of even functions with integral mean zero, and 
\[
\W:=\left\{ f\in C^{3+\alpha}_{0,e}(\s)\,:\, \|f\|_{C(\s)}<h \right\},
\]
we have (cf. \cite[Theorem 6.1 and Theorem 6.3]{EM12}):

%%% THEOREM: P3 %%%
\begin{thm}[Bifurcation of stationary solutions]\label{T:P3} Let $g_1\equiv0 $, $g_2\equiv {\rm const},$  and $\varrho_+>\varrho_-$, $1 \leq l \in \N$.
The point 
\begin{equation}\label{eq:G}
(\ov\gamma_l,0):=(g(\varrho_+-\varrho_-)/l^2,0)
\end{equation}
belongs to the closure $\cS$ of the set of nontrivial solutions of \eqref{eq:mean} in $(0,\infty)\times\W.$
Denote by $\cC_l$ the connected component of $\cS$ to which $(\ov\gamma_l,0)$ belongs.
Then  $\cC_l$ has, in a  neighbourhood of $(\ov\gamma_l,0),$  an analytic parametrisation
$(\gamma_l,f_l):(-\delta,\delta)\to (0,\infty)\times\W$, 
\begin{align*} 
\gamma_l(\e)&=\ov\gamma_l +\frac{3 g(\varrho_+-\varrho_-)}{8}\e^2+O(\e^3),\\[1ex]
f_l(\e)&=\e\cos(lx)+O(\e^2),
\end{align*}
as $\e\to0.$ 
Any other pair $(\gamma,0)$, $\gamma > 0$, belongs to a neighbourhood in $(0,\infty) \times \W$ with only trivial solutions of \eqref{eq:mean}.

Furthermore, if $\e$ is small and $\gamma=\gamma_l(\e),$ then $f_l(\e)$ is an unstable stationary solutions of \eqref{eq:S}.
\end{thm}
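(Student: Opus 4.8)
The plan is to recast \eqref{eq:mean} as a single analytic operator equation on a space of even, mean-zero functions, to which the theorems of Crandall--Rabinowitz and Rabinowitz apply directly. First I would record that the integral constraint forces the constant in \eqref{eq:mean} to vanish: since the curvature $\kappa(f):=f''/(1+f'^2)^{3/2}=\p_x\!\big(f'/\sqrt{1+f'^2}\big)$ is a total derivative we have $\int_\s\kappa(f)\,dx=0$, and as $\int_\s f\,dx=0$ on $\W$, integrating the first equation of \eqref{eq:mean} over $\s$ yields $\mathrm{const}=0$. Hence on $\W$ the problem is equivalent to $\Phi(\gamma,f)=0$, where
\[
\Phi(\gamma,f):=\gamma\,\frac{f''}{(1+f'^2)^{3/2}}+g(\varrho_+-\varrho_-)\,f.
\]
Because $f\in C^{3+\alpha}$ maps to $\kappa(f)\in C^{1+\alpha}$, evenness and the mean-zero property are preserved, and the nonlinearity is real-analytic in $f$ on $\W$, the map $\Phi\colon(0,\infty)\times\W\to C^{1+\alpha}_{0,e}(\s)$ is analytic with $\Phi(\gamma,0)\equiv 0$.

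Next I would linearize at the trivial branch: a direct computation gives $D_f\Phi(\gamma,0)[h]=\gamma h''+g(\varrho_+-\varrho_-)h$, which acts on the Fourier basis $\{\cos(kx)\}_{k\ge1}$ of $C^{m+\alpha}_{0,e}(\s)$ through the multipliers $g(\varrho_+-\varrho_-)-\gamma k^2$. This operator fails to be invertible exactly when $\gamma=\ov\gamma_k=g(\varrho_+-\varrho_-)/k^2$ for some $k$, and at $\gamma=\ov\gamma_l$ its kernel is precisely $\mathrm{span}\{\cos(lx)\}$, one-dimensional, with range of codimension one. The transversality condition is immediate, since $\p_\gamma D_f\Phi(\ov\gamma_l,0)[\cos(lx)]=-l^2\cos(lx)$ does not lie in that range. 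Thus \cite[Theorem 1.7]{CR} yields a local analytic branch $(\gamma_l,f_l)\colon(-\delta,\delta)\to(0,\infty)\times\W$ with $f_l(\e)=\e\cos(lx)+o(\e)$. For the isolation statement, at every $\gamma\notin\{\ov\gamma_k:k\in\N\}$ the operator $D_f\Phi(\gamma,0)$ is an isomorphism, so the implicit function theorem produces a neighbourhood of $(\gamma,0)$ in $(0,\infty)\times\W$ containing only trivial solutions.

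To pin down the stated expansions I would insert the ansatz $f_l(\e)=\e\cos(lx)+\e^2 w_2+\e^3 w_3+\cdots$ and $\gamma_l(\e)=\ov\gamma_l+\e\gamma_1+\e^2\gamma_2+\cdots$ into $\Phi=0$ and match powers of $\e$, using the Fredholm alternative with respect to $\ker D_f\Phi(\ov\gamma_l,0)$ to fix the $\gamma_j$. The symmetry $\Phi(\gamma,-f)=-\Phi(\gamma,f)$ forces $\gamma_l$ to be even and $f_l$ odd in $\e$, whence $\gamma_1=0$ and (after the usual normalization) $w_2=0$, giving $f_l(\e)=\e\cos(lx)+O(\e^2)$. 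At order $\e^3$ the only contributing nonlinearity is $-\tfrac32(f')^2f''$; expanding $(\cos(lx))'^2(\cos(lx))''=-\tfrac{l^4}{4}\cos(lx)+\tfrac{l^4}{4}\cos(3lx)$ and projecting the order-$\e^3$ equation onto $\cos(lx)$ produces the solvability condition $\gamma_2=\tfrac38\ov\gamma_l\,l^2=\tfrac38\,g(\varrho_+-\varrho_-)$, exactly the claimed coefficient.

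Finally I would globalize and settle stability. Since $D_f\Phi(\gamma,\cdot)$ is a zeroth-order perturbation of the elliptic isomorphism $\gamma\p_x^2$ between $C^{3+\alpha}_{0,e}$ and $C^{1+\alpha}_{0,e}$, it is Fredholm of index zero, and the eigenvalue crossing at $\ov\gamma_l$ is simple, hence of odd multiplicity; Rabinowitz's theorem \cite[Theorem II.3.3]{HK} then furnishes the connected component $\cC_l\subset\cS$ containing $(\ov\gamma_l,0)$. For the instability of $f_l(\e)$ I would invoke the principle of exchange of stability: the stationary operator $\Phi$ and the generator of the linearized evolution coming from the parabolic reduction of \eqref{eq:S} in \cite{EM12} have linearizations whose spectra agree in sign, the Dirichlet--Neumann factor being positive, so it suffices to track the real eigenvalue $\eta(\e)$ of $D_f\Phi(\gamma_l(\e),f_l(\e))$ emanating from the simple zero eigenvalue. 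Because the trivial-branch eigenvalue $g(\varrho_+-\varrho_-)-\gamma l^2$ crosses zero at $\ov\gamma_l$ with $\p_\gamma$-derivative $-l^2<0$ and the bifurcation is supercritical ($\gamma_2>0$), the exchange-of-stability formula gives $\mathrm{sign}\,\eta(\e)=\mathrm{sign}\big(-\e\,\gamma_l'(\e)\,(-l^2)\big)>0$ for $0<|\e|$ small, i.e.\ a genuinely positive eigenvalue and hence instability for every $l$. I expect the two real obstacles to be, first, installing the correct Fredholm/properness framework so that the global alternative of \cite{HK} is applicable to this quasilinear operator, and second, making the spectral correspondence between $D_f\Phi$ and the linearized semigroup generator precise enough that the sign computation rigorously implies instability of the full evolution \eqref{eq:S}.
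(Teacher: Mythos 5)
A point of orientation first: the paper does not actually prove Theorem~\ref{T:P3} itself; it imports it from \cite[Theorems~6.1 and 6.3]{EM12}, indicating only the method used there, namely that one differentiates the first relation of \eqref{eq:mean} to obtain a (third-order) equation for $f$ alone---which is why $C^{3+\alpha}_{0,e}(\s)$ appears---and then applies Crandall--Rabinowitz \cite{CR} and Rabinowitz \cite{HK}. Your proposal follows the same functional-analytic route in substance, but with one genuinely different and cleaner reduction: instead of differentiating away the unknown constant, you integrate \eqref{eq:mean} over $\s$, using that the curvature is an exact derivative of the periodic function $f'/\sqrt{1+f'^2}$ and that $f$ has zero mean, to conclude that the constant vanishes. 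This keeps the operator second order, and it incidentally gives a one-line proof of the paper's Lemma~\ref{L:3}, which the paper establishes by a much longer contradiction and periodic-extension argument. Your local analysis is correct and complete: the kernel $\mathrm{span}\{\cos(lx)\}$, the codimension-one range, the transversality condition, the odd symmetry forcing $\gamma_1=0$, the order-$\e^3$ projection yielding $\gamma_2=\tfrac38 g(\varrho_+-\varrho_-)$, and the implicit-function-theorem isolation of the remaining trivial pairs all check out.

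The two obstacles you flag at the end are real, but they resolve essentially as you expect, and they are exactly what the paper outsources to \cite{EM12}. For the global alternative, the properness worry disappears if you rewrite \eqref{eq:m} as the fixed-point equation $f=-\lambda(\p_x^2)^{-1}\bigl[f(1+f'^2)^{3/2}\bigr]$ on $\W$: since $(\p_x^2)^{-1}$ gains two derivatives on even mean-zero functions, this is a compact perturbation of the identity and \cite[Theorem~II.3.3]{HK} applies verbatim; note also that the statement of Theorem~\ref{T:P3} only requires $\cC_l$ to be defined as a connected component, the global structure being settled separately in Theorem~\ref{T:main2}. For instability, your exchange-of-stability sign computation is right (the branch bifurcates supercritically into $\gamma>\ov\gamma_l$, where the mode $\cos(lx)$ is linearly damped on the trivial branch, so the crossing eigenvalue is positive along the bifurcating branch), but turning this into instability for the evolution problem \eqref{eq:S} genuinely requires the parabolic reduction and the spectral correspondence of \cite{EM12}; that is precisely the content of \cite[Theorem~6.3]{EM12} and cannot be extracted from the stationary ODE alone. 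A shortcut worth recording: for $l\geq2$ the modes $\cos(kx)$ with $k<l$ remain exponentially growing for $\gamma$ near $\ov\gamma_l$, so instability of $f_l(\e)$ follows there by mere perturbation of the linearization at $0$, and the exchange-of-stability argument is only indispensable for $l=1$.
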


Theorem~\ref{T:P3} is obtained by differentiating the first relation of \eqref{eq:mean} 
and finding in this way an equation for $f$ only (this is why solutions in $C^{3+\alpha}_{0,e}(\s)$ are considered). 
It is not difficult to show  that if $\varrho_-\geq \varrho_+,$ then  \eqref{eq:mean}
 has only the trivial solution $f=0$ (see, e.g., \cite{EM5}). In the paper at hand we show (cf.~Remark~\ref{R:2}) 
that this is the case when $\varrho_-< \varrho_+$ too, as long as the surface tension coefficient is large enough.

%%%%%%%%%%%%%%%%%%%%
%%% SECTION: ODD %%%
%%%%%%%%%%%%%%%%%%%%
\section{Odd steady-state fingering solutions}\label{sec:odd}
In this section we consider the odd solutions of \eqref{eq:mean}. 
If $f$ is an odd function  on $\s$, then $f$ has integral mean $0$ and  $f(0)=f^{\prime\prime}(0)=0$.
Hence, the odd steady states of the Muskat problem~\eqref{eq:S} are exactly the odd solutions of the equation
\begin{equation}\label{eq:m}
\frac{f''}{(1+f'^2)^{3/2}}+\lambda f=0, \qquad \lambda > 0,
\end{equation}
within the set 
\[
\U:=\left\{ f\in C^{\infty}(\s) \colon \|f\|_{C(\s)}<h \right\}. 
\]
Here, the shorthand
\begin{equation}\label{eq:lambda}
\lambda := \frac{g(\varrho_+ - \varrho_-)}{\gamma},
\end{equation}
indicates the character of \eqref{eq:m} as an eigenvalue problem. 
\emph{Notice that, throughout this work, we consider the 
unstable case when $\varrho_+ > \varrho_-$, i.e. when the heavier fluid occupies the upper part of the membrane.}
 Equation \eqref{eq:m} admits the following scaling property: 

%%% PROPOSITION P:1 %%%
\begin{prop}\label{P:1} If $(\gamma,f) $ defines a solution of~\eqref{eq:m} through~\eqref{eq:lambda}, then 
\begin{equation}\label{eq:scaling}
\left( l^{-2}\gamma, l^{-1}f(l\cdot) \right),\qquad l\in\N,
\end{equation}
is also a solution of \eqref{eq:m}.
\end{prop}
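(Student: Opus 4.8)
The plan is to verify directly that the transformed pair satisfies both the differential equation and the defining relation \eqref{eq:lambda}. Write $g(x):=l^{-1}f(lx)$ and $\tilde\gamma:=l^{-2}\gamma$. The strategy is entirely computational: differentiate $g$ twice using the chain rule, substitute into \eqref{eq:m} with the rescaled eigenvalue $\tilde\lambda$, and check that the equation collapses back to the original one evaluated at the point $lx$.

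First I would compute the derivatives. Since $g(x)=l^{-1}f(lx)$, one has $g'(x)=f'(lx)$ and $g''(x)=l\,f''(lx)$. The crucial observation is that $g'$ carries no power of $l$, so the nonlinear denominator is unchanged: $(1+g'(x)^2)^{3/2}=(1+f'(lx)^2)^{3/2}$. Substituting, the curvature term becomes
\begin{equation*}
\frac{g''(x)}{(1+g'(x)^2)^{3/2}}=\frac{l\,f''(lx)}{(1+f'(lx)^2)^{3/2}}=-l\,\lambda\,f(lx),
\end{equation*}
where the last equality uses that $f$ solves \eqref{eq:m} at the point $lx$. On the other hand, $\lambda$ is defined through \eqref{eq:lambda} by $\lambda=g(\varrho_+-\varrho_-)/\gamma$, so the rescaled surface tension $\tilde\gamma=l^{-2}\gamma$ produces the eigenvalue $\tilde\lambda=g(\varrho_+-\varrho_-)/\tilde\gamma=l^2\lambda$. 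Then $\tilde\lambda\,g(x)=l^2\lambda\cdot l^{-1}f(lx)=l\,\lambda\,f(lx)$, which exactly cancels the curvature term above, confirming that $g$ solves \eqref{eq:m} with the eigenvalue $\tilde\lambda$ arising from $\tilde\gamma$.

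There is essentially no hard part here, as the scaling is built to be compatible with the geometry of the equation; the only points requiring care are bookkeeping. I would make sure to note that $g$ inherits the $2\pi$-periodicity of $f$ after the rescaling by an integer $l\in\N$ (this is precisely why $l$ is required to be a natural number rather than an arbitrary positive real), so that $g$ is again a well-defined function on $\s$, and that $\|g\|_{C(\s)}=l^{-1}\|f\|_{C(\s)}\le\|f\|_{C(\s)}<h$, so the rescaled solution remains in the admissible set $\U$. With these remarks the verification is complete.
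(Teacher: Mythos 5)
Your proposal is correct and is essentially the paper's argument made explicit: the paper simply observes that $\lambda$ is inversely proportional to $\gamma$ (so $\tilde\gamma=l^{-2}\gamma$ gives $\tilde\lambda=l^{2}\lambda$) and calls the result immediate, while you carry out the chain-rule computation and the periodicity/$\U$-membership bookkeeping that this observation compresses. One cosmetic remark: avoid naming the rescaled function $g$, since $g$ already denotes the gravitational constant in \eqref{eq:lambda}.
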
 
\begin{proof}
Since $\lambda$ is inversely proportional to $\gamma$, the result is immediate.
\end{proof}

The main result of this work is the following theorem, which states that a global bifurcation branch consisting of 
odd functions of minimal period $2\pi $ emanates from the trivial branch of solutions  $\Sigma$ at $(\ov\gamma_1,0),$
where $\ov\gamma_1$ is defined by \eqref{eq:G}.
This will later be used to characterise the global bifurcation branches of odd solutions which arise at $(\ov\gamma_l,0), l\geq 2,$ (see Corollary~\ref{C:1}
below),
and in Section~4 to describe the global bifurcation branches $\cC_l$ obtained in Theorem~\ref{T:P3}. 

Recall the definition of the beta function,
\[
B(x,y) := \int_0^1 t^{x-1}(1-t)^{y-1}\, dt,\qquad \re x, \re y>0.
\]

%%% THEOREM T:MAIN %%%
\begin{thm}\label{T:main}
For each $h>0$, there exists 
\begin{equation}\label{eq:g*}
\lambda_h \geq  \lambda_* := \frac{1}{2 \pi^2} \left(B\left(\textstyle{\frac{3}{4},\frac{1}{2}}\right)\right)^2,
\end{equation}
and corresponding $\gamma_h \leq\gamma_*$ defined by~\eqref{eq:lambda}, with 
the property that the nontrivial odd solutions of~\eqref{eq:m}  of minimal period  $2\pi$ within $\U$
coincide with the global bifurcation  curve  
\[
\Sigma_1:=\{(\gamma, \pm f_\gamma)\,:\, \gamma\in (\ov \gamma_1,\gamma_h)\},
\]
where the odd function  $f_\gamma\in C^\infty(\s)$ is uniquely determined by the parameter $\gamma\in (\ov \gamma_1,\gamma_*)$
 if we require that $f_\gamma'(0)\geq0.$
 Let $h_* := \sqrt{2/\lambda_*}$, and let  $f_\gamma$ denote 
the solution of \eqref{eq:m} of minimal period $2\pi$ (not necessarily in $\U$). 
The mapping $(\ov \gamma_1,\gamma_*)\times\s\ni (\gamma,x)\mapsto f_\gamma(x)$ is smooth, and 
\begin{itemize}
\item[$(i)$] if $h < h_*$, then $\gamma_h<\gamma_*$, and 
\[
\|f_\gamma\|_{C(\s)}=f_\gamma(\pi/2)\nearrow h \quad\text{ as }\quad \gamma\nearrow \gamma_h;
\] 

\item[$(ii)$] if $h = h_*$, then $\gamma_h=\gamma_*$, and 
\[
\|f_\gamma\|_{C(\s)}=f_\gamma(\pi/2)\nearrow h, \qquad \|f_\gamma^\prime\|_{C(\s)}=f'_\gamma(0)\nearrow \infty \quad\text{ as }\quad \gamma\nearrow \gamma_h;
\]

\item[$(iii)$] if $h > h_*$, then $\gamma_h=\gamma_*$, and 
\[
\|f_\gamma^\prime\|_{C(\s)} = f'_\gamma(0)\nearrow\infty \quad\text{ as }\quad \gamma\nearrow \gamma_h,
\]
while $\sup_{[\ov \gamma_1,\gamma_*)}\|f_\gamma\|_{C(\s)}<h$.
\end{itemize}
\end{thm}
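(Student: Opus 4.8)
The plan is to treat \eqref{eq:m} as an autonomous system in the phase plane and to read off the entire branch $\Sigma_1$ from a single period integral, rather than from the abstract bifurcation theory. Multiplying \eqref{eq:m} by $f'$ and integrating produces the first integral
\[
-\frac{1}{\sqrt{1+f'^2}}+\frac{\lambda}{2}f^2=E,
\]
whose level sets in the $(f,f')$-plane are, for $E\in(-1,0)$, closed curves encircling the center at the origin. An odd solution has $f(0)=0$; normalising by $p:=f'(0)\ge0$ and setting $k:=1-(1+p^2)^{-1/2}\in(0,1)$, the orbit through $(0,p)$ crosses the $f$-axis at the amplitude $A$ determined by $\frac{\lambda}{2}A^2=k$. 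Since $f''=-\lambda f(1+f'^2)^{3/2}$ shows $f$ to be concave wherever it is positive, on a quarter period $f$ increases monotonically from $0$ to $A$ while $f'$ decreases from $p$ to $0$; combined with oddness this makes every such orbit even about its crest, so that $\|f\|_{C(\s)}=f(\pi/2)=A$ and $\|f'\|_{C(\s)}=f'(0)=p$. Consequently the odd solutions of minimal period $2\pi$ correspond exactly to those $k\in(0,1)$ whose orbit has period $2\pi$.

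Next I would convert the requirement of period $2\pi$ into an explicit relation. Solving the first integral for $f'$ and integrating $dx=df/f'$ over a quarter period, the substitution carrying $f$ to $w=\frac{\lambda}{2}(A^2-f^2)$ and then to $w=k\sigma$ yields
\[
\frac{T}{4}=\frac{1}{\sqrt{2\lambda}}\,\Psi(k),\qquad \Psi(k):=\int_0^1\frac{1-k\sigma}{\sqrt{\sigma(1-\sigma)(2-k\sigma)}}\,d\sigma.
\]
Imposing $T=2\pi$ gives $\lambda=\frac{2}{\pi^2}\Psi(k)^2$ and, through $\frac{\lambda}{2}A^2=k$, the amplitude $A=\pi\sqrt{k}\,/\,\Psi(k)$. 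The two endpoints are explicit: $\Psi(0)=\pi/\sqrt2$ forces $\lambda\to1$ (that is $\gamma\to\ov\gamma_1$) and $A\to0$, whereas the Euler-integral evaluation $\Psi(1)=\frac12 B(3/4,1/2)$ forces $\lambda\to\lambda_*$ and $A\to\sqrt{2/\lambda_*}=h_*$.

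The analytic core is the strict monotonicity of $\Psi$. Because the substitution $w=k\sigma$ fixes the limits of integration, one may differentiate under the integral sign, and a direct computation gives
\[
\partial_k\!\left(\frac{1-k\sigma}{\sqrt{2-k\sigma}}\right)=\frac{\sigma\,(k\sigma-3)}{2\,(2-k\sigma)^{3/2}}<0 \qquad\text{for } \sigma,k\in(0,1),
\]
so that $\Psi'(k)<0$ throughout. Hence $\lambda(k)=\frac{2}{\pi^2}\Psi(k)^2$ is a strictly decreasing diffeomorphism of $(0,1)$ onto $(\lambda_*,1)$---equivalently of the corresponding $\gamma$-interval $(\ov\gamma_1,\gamma_*)$---while $A(k)=\pi\sqrt{k}/\Psi(k)$ increases strictly from $0$ to $h_*$. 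This establishes in one stroke that for every $\gamma\in(\ov\gamma_1,\gamma_*)$ there is exactly one admissible $k$, hence a unique odd $f_\gamma$ with $f_\gamma'(0)\ge0$ (the reflection $-f_\gamma$ supplying the sign in $\Sigma_1$), and that no odd solution of minimal period $2\pi$ exists for $\gamma\notin(\ov\gamma_1,\gamma_*)$.

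It then remains to assemble the conclusion. Smooth dependence of the flow on the datum $p=p(\gamma)$, valid as long as $k<1$ keeps the slope finite and the orbit a smooth closed curve, gives smoothness of $(\gamma,x)\mapsto f_\gamma(x)$ on $(\ov\gamma_1,\gamma_*)\times\s$; and since the constructed family is a connected curve issuing from $(\ov\gamma_1,0)$ as $k\to0$, where the local branch is unique by Crandall--Rabinowitz, it coincides with the global branch $\Sigma_1$. The trichotomy now follows from the monotonicity of $A$ and $p$. If $h<h_*$ there is a unique $k_h\in(0,1)$ with $A(k_h)=h$, which defines $\gamma_h<\gamma_*$ and yields $(i)$; if $h\ge h_*$ then $A<h$ for all $k<1$, so $\gamma_h=\gamma_*$, and letting $\gamma\nearrow\gamma_*$ forces $k\nearrow1$, whence $A\nearrow h_*$ and $p=\|f_\gamma'\|_{C(\s)}\nearrow\infty$, giving $(ii)$ when $h=h_*$ and $(iii)$ when $h>h_*$. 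I expect the period monotonicity to be the only real obstacle; it is what makes the choice of the substitution $w=k\sigma$---fixing the endpoints before differentiating---decisive, and the completeness of the classification (that these are \emph{all} the minimal-period-$2\pi$ odd solutions) is what forces the phase-plane approach rather than bifurcation theory alone.
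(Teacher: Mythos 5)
Your proposal is correct and follows essentially the same route as the paper: both derive the first integral \eqref{eq:u1}, reduce the minimal-period condition to a quarter-period integral (your $\Psi(k)$ is exactly the paper's $\theta_{\lambda,\alpha}$ in \eqref{eq:tet} after the substitution $\sigma=1-\tau^2$, $k=1-\beta$), establish its strict monotonicity by differentiating under the integral sign, evaluate the endpoints via the Beta function, and read off the trichotomy from the monotone amplitude. The only difference is bookkeeping: the paper fixes $\lambda$ and obtains $\alpha(\lambda)$ from the implicit function theorem (Lemma~\ref{L:24}), whereas you exploit the fact that $\lambda$ enters the period only through the prefactor $1/\sqrt{2\lambda}$ to solve $T=2\pi$ explicitly as $\lambda=\tfrac{2}{\pi^2}\Psi(k)^2$ --- a slightly more direct inversion resting on the same monotonicity computation.
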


Recall that $\ov\gamma_l,$ $1\leq l\in\N,$ is the constant defined by \eqref{eq:G}.
Combining Proposition~\ref{P:1}  and Theorem~\ref{T:main} we conclude:

%%% COROLLARY C:1 %%%
\begin{cor}\label{C:1} 
Let $2 \leq l\in\N$. There exists $\gamma_{l,h}\in(\ov \gamma_1,\gamma_*]$ with the property that
\[
\Sigma_l:=\left\{ (l^{-2}\gamma, \pm l^{-1}f_\gamma(l\cdot))\,:\, \gamma\in (\ov \gamma_1,\gamma_{l,h}) \right\}
\]
consists exactly of the nontrivial odd solutions of minimal period $2\pi/l$ of \eqref{eq:m} within $\U.$
The alternatives $(i)-(iii) $ of Theorem~\ref{T:main} hold true with the natural modifications.
The disjoint union
\begin{equation}\label{union}
\mathcal{S}_{2\pi}:=\left(\cup_{l=1}^\infty\Sigma_l\right)\cup\left(\cup_{l=1}^\infty\left[\left(\ov \gamma_{l+1},\ov\gamma_l\right)\times\{0\}\right]\right)
\cup\left[(\ov\gamma_1,\infty)\times\{0\}\right]
\end{equation}
constitutes all nontrivial $2\pi-$periodic and odd solutions of \eqref{eq:m} in $\U$.
\end{cor}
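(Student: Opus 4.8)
The plan is to deduce everything from Theorem~\ref{T:main} through the scaling of Proposition~\ref{P:1}. Writing $S_l(\gamma,f):=\big(l^{-2}\gamma,\,l^{-1}f(l\cdot)\big)$, I first note that $S_l$ maps the odd solutions of~\eqref{eq:m} of minimal period $2\pi$ bijectively onto those of minimal period $2\pi/l$, with inverse $(\gamma,f)\mapsto\big(l^2\gamma,\,l\,f(\cdot/l)\big)$. Two elementary identities do most of the work, namely $\|l^{-1}f(l\cdot)\|_{C(\s)}=l^{-1}\|f\|_{C(\s)}$ and $\big(l^{-1}f(l\cdot)\big)'=f'(l\cdot)$: the first shows that $l^{-1}f(l\cdot)\in\U$ precisely when $\|f\|_{C(\s)}<lh$, so the odd period-$2\pi/l$ solutions in $\U$ are exactly the $S_l$-images of the odd period-$2\pi$ solutions $f$ with $\|f\|_{C(\s)}<lh$.

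Next I would apply Theorem~\ref{T:main} with the cell height $lh$ in place of $h$. Since the family $\{f_\gamma\}$ furnished there is independent of the height, this merely replaces the threshold $\gamma_h$ by some $\gamma_{lh}\le\gamma_*$, and the bifurcation at $\ov\gamma_1$ forces $\gamma_{lh}>\ov\gamma_1$. Putting $\gamma_{l,h}:=\gamma_{lh}\in(\ov\gamma_1,\gamma_*]$ and applying $S_l$ to the branch $\{(\gamma,\pm f_\gamma):\gamma\in(\ov\gamma_1,\gamma_{lh})\}$ reproduces $\Sigma_l$ verbatim. The trichotomy then transfers through the two scaling identities: the $C(\s)$-norm contracts by $l^{-1}$ while $\|f'\|_{C(\s)}$ is left invariant, so $\|f_\gamma\|_{C(\s)}\nearrow lh$ becomes $\|l^{-1}f_\gamma(l\cdot)\|_{C(\s)}\nearrow h$ (the maximum now sitting at $\pi/(2l)$), the slope blow-up $\|f_\gamma'\|_{C(\s)}\nearrow\infty$ is preserved word for word, and the three cases $(i)$--$(iii)$ are selected by comparing $lh$ with $h_*$, i.e. $h$ with $h_*/l$.

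Finally, for the global description~\eqref{union} I would use that a non-constant continuous $2\pi$-periodic function has minimal period $2\pi/l$ for a unique $l\in\N$ --- its set of periods is $p\Z$ for the minimal period $p$, and $2\pi\in p\Z$ forces $p=2\pi/l$. Thus any nontrivial odd $2\pi$-periodic solution $g\in\U$ has some minimal period $2\pi/l$; then $l\,g(\cdot/l)$ is an odd period-$2\pi$ solution with $\|l\,g(\cdot/l)\|_{C(\s)}<lh$, hence equals some $\pm f_\gamma$ by the previous step, so that $g=\pm l^{-1}f_\gamma(l\cdot)\in\Sigma_l$. The $\Sigma_l$ are pairwise disjoint because distinct $l$ give distinct minimal periods, and the only remaining odd $2\pi$-periodic solutions are the trivial ones $f\equiv0$, filling the horizontal segments of~\eqref{union}; assembling these pieces gives $\cS_{2\pi}$.

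The difficulty I anticipate is bookkeeping rather than analysis: one must verify that $S_l$ neither creates nor destroys the minimal period in either direction, which is exactly why Theorem~\ref{T:main} is phrased for the minimal-period-$2\pi$ solutions and why the index $l$ attached to each $g$ is intrinsic to $g$. A secondary point requiring attention is the placement of the bifurcation values $\ov\gamma_l$: each is the endpoint at which $\Sigma_l$ detaches from the trivial axis, and the open intervals in~\eqref{union} are arranged precisely so that the nontrivial branches and the trivial segments stay pairwise disjoint.
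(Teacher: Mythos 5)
Your proposal is correct and follows essentially the same route as the paper: the paper likewise defines $\gamma_{l,h}$ by running the height-constraint argument from the proof of Theorem~\ref{T:main} with the scaled bound (your ``height $lh$'' reformulation, via $\|l^{-1}f(l\cdot)\|_{C(\s)}=l^{-1}\|f\|_{C(\s)}$), transfers the trichotomy through the scaling of Proposition~\ref{P:1}, and deduces \eqref{union} from the fact that each $\Sigma_l$ consists exactly of functions of minimal period $2\pi/l$. Your write-up merely makes explicit the bookkeeping (the bijectivity of the scaling map, the uniqueness of the minimal period $2\pi/l$ of a nontrivial continuous periodic solution) that the paper's terse proof leaves implicit.
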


%%% FIGURE 2: BIFURCATION %%%
\begin{figure}
\includegraphics[clip=true, angle=0,  trim = 0 -10 0 0, width=0.4\linewidth]{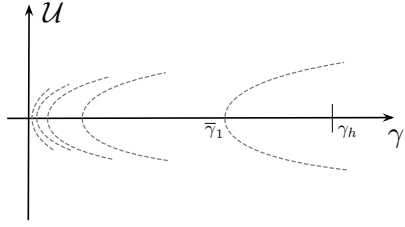}
\caption{\small A qualitative picture of the $2\pi$-periodic solutions described in Corollary~\ref{C:1}.}
\label{Figure2}
\end{figure}

\begin{rem}
Put differently, Corollary \ref{C:1} states that global bifurcation branches consisting of odd solutions emanate from $\Sigma$
 at $\ov\gamma_l,$ $1 \leq l\in\N$. Moreover, these bifurcation branches are pairwise disjoint. 
\end{rem}

\begin{rem}\label{rem:bifurcation}
It is worth mentioning that, for the same $\gamma$,
 we may find $2\pi$-periodic odd solutions of \eqref{eq:m} of 
different minimal periods (see Figure~\ref{Figure2}). 
Since $\gamma_{l,h}=\gamma_*>\ov\gamma_1$, for $l$  large enough, 
 there exists positive integers $l\in\N,$ such that
\[
\ov\gamma_{l+1}<\ov\gamma_l<\frac{\gamma_*}{(l+1)^2}<\frac{\gamma_*}{l^2}.
\]
Consequently equation \eqref{eq:m} 
possesses a solution which belongs to $\Sigma_l$ and another one in  $\Sigma_{l+1},$ corresponding to the same $\gamma$. 
\end{rem}

In order to prove Theorem~\ref{T:main} we need some preliminary results.

%%% PROPOSITION P:eau %%%
\begin{prop}\label{P:eau}
Let $\lambda > 0$ and $\alpha \in \R$ be given. The initial-value problem 
\begin{equation}\label{eq:ivp}
\left\{
\begin{array}{rllll}
\displaystyle \frac{f''}{(1+f'^2)^{3/2}}+\lambda f&=&0&\text{on}&\s,\\[1ex]
f(0)&=&0,\\[1ex]
f'(0)&=&\alpha
\end{array}
\right.
\end{equation}
possesses a unique classical solution $f_{\lambda,\alpha}$.
The solution is odd and periodic in $x$, and smooth as a map 
\[
(0,\infty)\times \R \times\R\mapsto f_{\lambda,\alpha}(x).
\]
\end{prop}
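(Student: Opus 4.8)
The plan is to recast \eqref{eq:ivp} as a first-order autonomous system in the phase plane and to exploit a conserved energy. Writing $p:=f'$, the equation is equivalent to $f'=p$, $p'=-\lambda f(1+p^2)^{3/2}$, whose right-hand side is real-analytic in $(f,p)$ and depends analytically on the parameter $\lambda>0$. The Picard--Lindel\"of theorem then yields a unique local classical solution $f_{\lambda,\alpha}$ realising the initial data. Once global existence is in hand, the fact that the vector field is $C^\infty$ jointly in $(f,p,\lambda)$, together with the linear appearance of $\alpha$ in the data, lets the standard theorem on smooth dependence on initial conditions and parameters deliver the asserted smoothness of $(\lambda,\alpha,x)\mapsto f_{\lambda,\alpha}(x)$.

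The key observation is that \eqref{eq:m} admits a first integral. Multiplying the equation by $f'$ and using $\frac{d}{dx}\bigl(-(1+f'^2)^{-1/2}\bigr)=\frac{f'f''}{(1+f'^2)^{3/2}}$, one sees that
\[
E := -\frac{1}{\sqrt{1+f'^2}}+\frac{\lambda}{2}f^2
\]
is constant along solutions, with value $E=-(1+\alpha^2)^{-1/2}$ fixed by the data. Since $-(1+f'^2)^{-1/2}\in[-1,0)$, this identity forces simultaneously $\frac{\lambda}{2}f^2\le 1-(1+\alpha^2)^{-1/2}<1$ and $(1+f'^2)^{-1/2}\ge(1+\alpha^2)^{-1/2}$, the latter giving $|f'|\le|\alpha|$. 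Hence $f$ and $f'$ remain in a fixed compact set, and the usual continuation argument extends the solution to all of $\R$, establishing global existence.

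Oddness then follows from uniqueness together with the symmetry of \eqref{eq:m}: if $f$ solves \eqref{eq:ivp}, so does $x\mapsto -f(-x)$, with the very same data $f(0)=0$, $f'(0)=\alpha$, whence the two coincide. For periodicity I would argue in the phase plane. For $\alpha\ne0$ the orbit lies on the level set $\{E=-(1+\alpha^2)^{-1/2}\}$, which—by the bounds above and the fact that $E$ is even in both $f$ and $p$—is a compact, smooth, simple closed curve encircling the unique equilibrium $(0,0)$ and containing no equilibrium itself; the flow along such a regular level curve is periodic (its speed is continuous and strictly positive on the compact curve, so the finite arc length is traversed in finite time and the motion repeats). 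Equivalently, separating variables gives the finite quarter-period $\int_0^{f_{\max}}\frac{G(f)}{\sqrt{1-G(f)^2}}\,df$, where $G(f):=(1+\alpha^2)^{-1/2}+\frac{\lambda}{2}f^2$ and $f_{\max}$ is the positive root of $G=1$, the apparent singularity at $f_{\max}$ being of integrable square-root type.

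The routine parts are local existence, uniqueness, and smooth dependence; the only point requiring genuine care is periodicity, where one must confirm that the level curve is truly a closed orbit free of equilibria (equivalently, that the period integral converges). Since the a priori bounds furnished by the energy $E$ already confine the orbit to a compact annular level set, the whole argument ultimately rests on identifying the first integral $E$.
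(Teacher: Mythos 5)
Your proof is correct, and it shares its backbone with the paper's: both pass to the planar system $(f,g)'=(g,-\lambda f(1+g^2)^{3/2})$ as in \eqref{eq:ivp2}, both invoke standard existence/uniqueness/smooth-dependence theory, and your conserved energy $E$ is precisely the identity \eqref{eq:u1}, which the paper obtains by multiplying the equation by $-f'$ and integrating. Where you diverge is in how global existence, oddness, and periodicity are concluded. The paper works on the maximal interval of existence and shows by contradiction (treating $T=\infty$ and $T<\infty$ separately, and using \eqref{eq:u1} only to rule out blow-up of $f$) that $f'$ has a first zero $\theta_{\lambda,\alpha}$; it then constructs the global solution explicitly through the reflection formulas \eqref{eq:fdef}, from which oddness and $4\theta_{\lambda,\alpha}$-periodicity are immediate. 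You instead read off the a priori bounds $|f'|\le|\alpha|$ and $\lambda f^2/2\le 1-(1+\alpha^2)^{-1/2}$ from the energy to get global existence by continuation, obtain oddness from uniqueness applied to $x\mapsto -f(-x)$, and obtain periodicity from the closed-orbit argument: the orbit lies on a compact, regular level curve of $E$ that avoids the unique equilibrium $(0,0)$ (regularity holds since $\nabla E=(\lambda f,\,g(1+g^2)^{-3/2})$ vanishes only at the origin, which is excluded because $E(0,0)=-1<-(1+\alpha^2)^{-1/2}$ for $\alpha\neq 0$). Your route is the more conceptual and arguably cleaner one; the paper's more hands-on route has the advantage of producing, inside the proof itself, the quantities needed later in the paper --- the quarter-period $\theta_{\lambda,\alpha}$, the maximum value \eqref{eq:u2}, and the reflection structure \eqref{eq:fdef} --- which feed directly into the period formula \eqref{eq:tet} and the argument of Lemma~\ref{L:3}. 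Note also that your fallback justification of periodicity (convergence of the quarter-period integral, with its integrable square-root singularity) is exactly the computation the paper performs when deriving \eqref{eq:tet}, so the two proofs reunite there.
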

\begin{proof}
Setting $g:=f'$, we  rewrite \eqref{eq:ivp} as an initial value problem for the pair $(f,g),$ 
\begin{equation}\label{eq:ivp2}
\left(
\begin{array}{c}
f\\
g
\end{array}
\right)'
=F
\left(
\begin{array}{c}
f\\
g
\end{array}
\right),\quad
\left(
\begin{array}{c}
f\\
g
\end{array}
\right)(0)=
\left(
\begin{array}{c}
0\\
\alpha
\end{array}
\right),
\end{equation}
where $F:\R^2\to\R^2 $ is defined by
\[
F
\left(
\begin{array}{c}
f\\
g
\end{array}
\right)=
\left(
\begin{array}{c}
g\\
-\lambda f(1+g^2)^{3/2}
\end{array}
\right).
\]
Since $F$ is smooth,
there is a unique and smooth solution of \eqref{eq:ivp2}, defined on a maximal interval $[0,T)$; if $T<\infty$, then the solution blows up, meaning that $\sup_{[0,T)}|(f,g)|=\infty$ (cf. \cite{A}).

Notice that if $f$ is an odd solution of \eqref{eq:m} with slope $f'(0)=\alpha>0$,
then $-f$ is also an odd solution  of~\eqref{eq:m} with slope $-\alpha$.
Without loss of generality we may therefore restrict our attention to solutions of \eqref{eq:m} with nonnegative slope at $x=0$. Clearly,  the solution of \eqref{eq:ivp} with slope $\alpha=0$ is $f \equiv 0$. 

%%% WRITE ALTERNATIVE PROOF %%%
Suppose now that $\alpha>0$. We prove that there exists a  positive constant $\theta_{\lambda,\alpha}$ such that $f'>0$ on $[0,\theta_{\lambda,\alpha})$ and $f'(\theta_{\lambda,\alpha})=0.$
Indeed, assuming the contrary, we obtain in view of $f'(0)=\alpha>0$, that $f'>0$ on $[0,T)$.

On the one hand, if $T=\infty$,  we infer from \eqref{eq:m} that
\[
0=f^{\prime\prime}(x) +\lambda f(x)  \left( 1+(f^\prime(x))^2\right)^{3/2}\geq f^{\prime\prime}(x)+\lambda f(1), \quad\text{for all }\quad x\geq 1.
\]
Integration yields that
\[
f'(x)\leq f'(1)-\lambda f(1)(x-1)\to -\infty \quad\text{ as }\quad x\to\infty,
\]
which contradicts our assumption.

On the other hand, if $T<\infty,$ then either $\sup_{[0,T)}f=\infty$ or $\sup_{[0,T)}f'=\infty$, 
the latter case being excluded by the fact that $f'$ is decreasing for positive $f$. If $\sup_{[0,T)}f=\infty$, we multiply \eqref{eq:m} by $-f'$ and integrate over $[0,x]$ to obtain that
\begin{equation}\label{eq:u1}
\frac{1}{(1+f'^2(x))^{1/2}}=\frac{1}{(1+\alpha^2)^{1/2}}+\frac{\lambda f^2(x)}{2}, \qquad 0 < x<T.
\end{equation} 
Letting $x\to T$, we obtain the desired contradiction. Consequently, there exists a unique $\theta_{\lambda,\alpha}>0$, such that $f'>0$ on $[0,\theta_{\lambda,\alpha})$ and $f'(\theta_{\lambda,\alpha})=0.$

It can be easily seen that  $f$ extends to an odd function of minimal period $T_{\lambda,\alpha}:=4 \theta_{\lambda,\alpha}$. Indeed, we see that
\begin{equation}\label{eq:fdef}
f_{\lambda,\alpha}(x):=\left\{
\begin{array}{rllll}
f(x),&&  0\leq x\leq \theta_{\lambda,\alpha},\\[1ex]
f(2\theta_{\lambda,\alpha}-x),&& \theta_{\lambda,\alpha} \leq x\leq 2\theta_{\lambda,\alpha},\\[1ex]
-f(x-2\theta_{\lambda,\alpha}),&& 2\theta_{\lambda,\alpha} \leq x\leq 3\theta_{\lambda,\alpha},\\[1ex]
-f(4\theta_{\lambda,\alpha}-x),&& 3 \theta_{\lambda,\alpha}\leq x\leq 4 \theta_{\lambda,\alpha},
\end{array}
\right.
\end{equation}
has an odd and $T_{\lambda,\alpha}-$periodic extension on the whole of $\R$.
\end{proof}

We now  explicitly determine the minimal period, called $T_{\lambda,\alpha}$, of the solution $f_{\lambda,\alpha}$ of \eqref{eq:ivp}. In order to simplify calculations, we put
\[
\beta := \frac{1}{\sqrt{1+\alpha^2}}.
\]
From relation \eqref{eq:u1}, we find for $x=\theta_{\lambda,\alpha}$ that the maximum of $f_{\lambda,\alpha}$ is
\begin{equation}\label{eq:u2}
f_{\lambda,\alpha}(\theta_{\lambda,\alpha})=\sqrt{2 \lambda^{-1} \left(1-\beta \right)}.
\end{equation}
We also infer from the same relation  that 
\[
f_{\lambda,\alpha}'(x)=\sqrt{\left(\beta+\frac{\lambda f^2_{\lambda,\alpha}(x)}{2}\right)^{-2}-1}\quad
\text{ for all $x\in[0,\tet]$.}
\]
Dividing this equality by its right-hand side, we find that
\[
\theta_{\lambda,\alpha} = \int_0^{\theta_{\lambda,\alpha}} f_{\lambda,\alpha}^\prime(x)
\left(\left(\beta+\frac{\lambda f^2_{\lambda,\alpha}(x)}{2}\right)^{-2}-1\right)^{-1/2}\, dx,
\] 
and the variable substitution $f(x)=s$ yields 
\[
\theta_{\lambda,\alpha}=\int_0^{f_{\lambda,\alpha}(\theta_{\lambda,\alpha})}
\left(\left(\beta + \frac{\lambda s^2}{2}\right)^{-2}-1\right)^{-1/2}\, ds.
\]
Finally, setting $\tau:=s /f_{\lambda,\alpha}(\theta_{\lambda,\alpha})$, we obtain in virtue of \eqref{eq:u2} that
\begin{equation}\label{eq:tet}
\theta_{\lambda,\alpha}=\sqrt{\frac{2}{\lambda}}\int_0^1\frac{(1-\beta) \tau^2 + \beta}{\sqrt{(1-\tau^2)
\left[1+ (1-\beta) \tau^2+ \beta  \right]}}\, d\tau,
\end{equation}
for all $\alpha,\lambda>0.$
Since $\alpha \mapsto \beta$ is smooth, we may extend  $\theta_{\lambda,\alpha}$ continuously to   $(0,\infty)\times[0,\infty)$.  
More precisely, we state:

%%% LEMMA L:MON %%%
\begin{lemma}\label{L:mon}
The function $\theta_{\lambda,\alpha}$ defined in \eqref{eq:tet},
\[
(0,\infty)\times[0,\infty)\ni (\lambda,\alpha)\mapsto \theta_{\lambda,\alpha} \in(0,\infty),
\]
is smooth, and strictly decreasing with respect to both $\lambda$ and $\alpha$. Moreover\footnote{Let $\gamma>0$ be fixed. Recall that, given $\alpha >0$, the value $T_{\lambda,\alpha}=4 \theta_{\lambda,\alpha}$ denotes the minimal period of the solution $f_{\lambda,\alpha}$ of \eqref{eq:ivp} and that the latter problem possesses the trivial solution $f_{\lambda,0}\equiv 0$ if $\alpha=0$. Having said this, it is clear that the value $\theta_{\lambda,0}=\pi/(2\sqrt{\lambda})$ is not related to the trivial solution $f_{\lambda,0}\equiv 0$, but is just the limit of $\theta_{\lambda,\alpha}$ as $\alpha\searrow 0$.}
\begin{equation}\label{eq:lim}
\theta_{\lambda,0}=\frac{\pi}{2\sqrt{\lambda}}\qquad\text{and}
\qquad\lim_{\alpha\nearrow \infty}\theta_{\lambda,\alpha}=\frac{1}{2\sqrt{2 \lambda}} \, B\left(\frac{3}{4},\frac{1}{2}\right).
\end{equation} 
\end{lemma}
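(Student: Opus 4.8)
The plan is to exploit the fact that in the integral representation \eqref{eq:tet} the parameter $\lambda$ enters only through the prefactor $\sqrt{2/\lambda}$, so that I may write $\theta_{\lambda,\alpha}=\sqrt{2/\lambda}\,I(\beta)$, where $\beta=(1+\alpha^2)^{-1/2}\in(0,1]$ and
\[
I(\beta):=\int_0^1\frac{N}{\sqrt{(1-\tau^2)(1+N)}}\,d\tau,\qquad N=N(\tau,\beta):=\tau^2+\beta(1-\tau^2).
\]
The decisive observation is the algebraic identity $1+(1-\beta)\tau^2+\beta=1+N$, which recasts the bracket in \eqref{eq:tet} in terms of $N$ alone. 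Since $\lambda\mapsto\sqrt{2/\lambda}$ is smooth and strictly decreasing on $(0,\infty)$, the assertions in $\lambda$ are immediate, and the whole statement reduces to showing that $I$ is smooth and strictly increasing on $(0,1]$ --- strict increase in $\beta$ corresponding to strict decrease in $\alpha$, because $\alpha\mapsto\beta$ is strictly decreasing with $\beta'(\alpha)=-\alpha(1+\alpha^2)^{-3/2}$ --- together with the two boundary evaluations.

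For smoothness I would differentiate under the integral sign. For $\tau\in[0,1)$ the integrand is jointly smooth in $(\tau,\beta)$: the factor $1+N\geq1$ is bounded away from zero, and the only $\tau$-singularity is the factor $(1-\tau^2)^{-1/2}$, which is independent of $\beta$ and integrable on $[0,1]$. Writing the integrand as $(1-\tau^2)^{-1/2}\psi(N)$ with $\psi(N)=N(1+N)^{-1/2}$ and using that $N$ is affine in $\beta$ (so $\partial_\beta N=1-\tau^2$ and $\partial_\beta^m N=0$ for $m\geq2$), each $\beta$-derivative equals $(1-\tau^2)^{-1/2}\psi^{(k)}(N)(1-\tau^2)^k$, which for $k\geq1$ is bounded and for $k=0$ is dominated by $(1-\tau^2)^{-1/2}$, uniformly for $\beta$ in compact subsets of $(0,\infty)$. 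Hence $I\in C^\infty$ on a neighbourhood of $(0,1]$; composing with the (real-analytic) map $\alpha\mapsto\beta$ on $[0,\infty)$ and multiplying by $\sqrt{2/\lambda}$ yields joint smoothness of $\theta_{\lambda,\alpha}$ on $(0,\infty)\times[0,\infty)$. I expect this justification across the endpoint singularity to be the main --- though routine --- technical point.

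Monotonicity then becomes transparent. Differentiating under the integral and using $\partial_\beta N=1-\tau^2$ gives
\[
I'(\beta)=\int_0^1\sqrt{1-\tau^2}\,\frac{2+N}{2(1+N)^{3/2}}\,d\tau>0,
\]
since $N\geq0$ makes the integrand strictly positive on $(0,1)$. Thus $I$ is strictly increasing, and by the chain rule $\partial_\alpha\theta_{\lambda,\alpha}=\sqrt{2/\lambda}\,I'(\beta)\,\beta'(\alpha)<0$ for $\alpha>0$; continuity at $\alpha=0$ then upgrades this to strict monotonicity on all of $[0,\infty)$.

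Finally I would read off the limits \eqref{eq:lim}. At $\alpha=0$ one has $\beta=1$, whence $N\equiv1$ and the integrand reduces to $(2(1-\tau^2))^{-1/2}$, giving $I(1)=\pi/(2\sqrt2)$ and $\theta_{\lambda,0}=\pi/(2\sqrt\lambda)$. As $\alpha\nearrow\infty$ one has $\beta\searrow0$; since the integrand is dominated by the integrable $(1-\tau^2)^{-1/2}$ uniformly in $\beta$, dominated convergence yields
\[
\lim_{\beta\searrow0}I(\beta)=\int_0^1\frac{\tau^2}{\sqrt{1-\tau^4}}\,d\tau=\frac14\,B\!\left(\textstyle\frac34,\frac12\right),
\]
the last equality following from the substitution $u=\tau^4$. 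Multiplying by $\sqrt{2/\lambda}$ gives the stated value $\tfrac{1}{2\sqrt{2\lambda}}B(\tfrac34,\tfrac12)$.
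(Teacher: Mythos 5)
Your proof is correct and takes essentially the same route as the paper's: both factor out $\sqrt{2/\lambda}$, reduce the monotonicity in $\alpha$ to the positivity of the derivative of $N/\sqrt{1+N}$ (the paper's $g_\tau/\sqrt{1+g_\tau}$) combined with the chain rule through $\beta=(1+\alpha^2)^{-1/2}$, and obtain the limits in \eqref{eq:lim} by evaluating at $\beta=1$ and letting $\beta\searrow 0$, with the same substitution producing $\tfrac14 B\left(\tfrac34,\tfrac12\right)$. The only difference is presentational: you spell out the domination estimates justifying differentiation under the integral sign and the dominated-convergence passage to the limit, which the paper treats as clear.
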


\begin{proof}
The integral on the right-hand side of \eqref{eq:tet} exists because the singularity behaves like ${(1-\tau)^{-1/2}}$ as $\tau \to 1$. 
Therefrom, the regularity assertion is clear. Let us now show that $\theta_{\lambda,\alpha}$ is strictly decreasing with respect to $\alpha$.
To this aim we fix $\tau \in(0,1)$ and define the function 
\begin{align*}
g_{\tau}(\alpha) &:= (1-\beta)\tau^2 + \beta\\ 
&= (1-(1+\alpha^2)^{-1/2})\tau^2 + (1+\alpha^2)^{-1/2}, \qquad \alpha\geq0.
\end{align*}
Since 
\[
\theta_{\lambda,\alpha} = \sqrt{\frac{2}{\lambda}} \int_0^1\frac{1}{\sqrt{1-\tau^2}}\frac{g_{\tau}(\alpha)}{
\sqrt{1+g_{\tau}(\alpha)}}\, d\tau,
\]
we see that $\theta_{\lambda,\alpha}$ is strictly decreasing with respect to $\lambda$, and it suffices to show that the mapping
$\left[[0,\infty)\ni \alpha\mapsto g_{\tau}(\alpha)(1+g_{\tau}(\alpha))^{-1/2}\right]$
has a negative derivative for all $\tau\in(0,1)$, $\alpha > 0$. Indeed, since for such $\alpha$ and $\tau$ we have that
\[
\frac{\partial g_\tau}{\partial \beta} = 1 - \tau^2 > 0 \quad\text{ and }\quad \frac{\partial \beta}{\partial \alpha} = -\frac{\alpha}{(1+\alpha^2)^{3/2}} < 0, 
\]
it follows from the chain rule that
\[
\frac{d}{d\alpha}
\left(\frac{g_{\tau}(\alpha)}{
1+g_{\tau}(\alpha)}
\right)=\frac{g_\tau^\prime(\alpha)(2+g_\tau(\alpha))}{2(1+g_\tau(\alpha))^{3/2}}<0,\quad\text{for all $\tau\in(0,1)$.}
\]
In view of that $g_\tau(0)=1,$ the first equality in \eqref{eq:lim} follows.
Taking into consideration that $\lim_{\alpha\to \infty}g_\tau(\alpha) = \tau^2$, 
we obtain that
\begin{align*}
\lim_{\alpha\to\infty}\theta_{\lambda,\alpha}&=\sqrt{\frac{2}{\lambda}}\int_0^1 \frac{\tau^2}{\sqrt{1-\tau^4}}\, d\tau= \frac{1}{2\sqrt{2 \lambda}} \int_0^1s^{-1/4}(1-s)^{-1/2}\, ds\\[1ex]
&= \frac{1}{2\sqrt{2 \lambda}}\,  B\left(\frac{3}{4},\frac{1}{2}\right).
\end{align*}
This completes the proof.
\end{proof}

Recall that we  are interested in determining the solutions of \eqref{eq:m}
which are not only odd, but also of minimal period $2\pi$.
Thus, we are interested in determining the set of $\lambda$ and $\alpha$ such that
$\theta_{\lambda,\alpha} = \pi/2$. The following lemma provides an answer in terms of a function $\lambda \mapsto \alpha$. 

%%% LEMMA L:24 %%%
\begin{lemma}\label{L:24} Let $\lambda_*$ be the constant defined by the relation \eqref{eq:g*}. If $\lambda \not\in (\lambda_*,1]$ then $\theta_{\lambda, \alpha} \neq \pi/2$, but given $\lambda \in(\lambda_*,1]$ there exists a unique $\alpha(\lambda)\in[0,\infty)$
such that $\theta_{\lambda,\alpha(\lambda)}=\pi/2.$
The mapping
\[
(\lambda_*,1] \ni \lambda \mapsto \alpha(\lambda)\in[0,\infty)
\]
is smooth, bijective, and decreasing.
\end{lemma}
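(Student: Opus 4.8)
The plan is to exploit that the $\lambda$-dependence in formula \eqref{eq:tet} is entirely explicit, so that the two-variable equation $\theta_{\lambda,\alpha}=\pi/2$ collapses to the inversion of a single monotone function of $\alpha$. Since the integral in \eqref{eq:tet} depends on $\alpha$ only (through $\beta=(1+\alpha^2)^{-1/2}$) and not on $\lambda$, the $\lambda$-dependence factors out and one obtains the scaling identity
\[
\theta_{\lambda,\alpha}=\frac{\theta_{1,\alpha}}{\sqrt{\lambda}},\qquad \lambda>0,\ \alpha\geq0,
\]
which reflects the scaling of Proposition~\ref{P:1}. Hence, for $\alpha\geq0$, the identity $\theta_{\lambda,\alpha}=\pi/2$ is equivalent to
\[
\lambda=P(\alpha):=\frac{4}{\pi^2}\,\theta_{1,\alpha}^2,
\]
and the whole lemma reduces to analysing the single function $P$.

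First I would record the properties of $P$ directly from Lemma~\ref{L:mon}. Because $\alpha\mapsto\theta_{1,\alpha}$ is smooth, strictly positive and strictly decreasing on $[0,\infty)$, the same holds for $P$; and the two limits in \eqref{eq:lim} yield $P(0)=\frac{4}{\pi^2}(\pi/2)^2=1$ together with $\lim_{\alpha\to\infty}P(\alpha)=\frac{4}{\pi^2}\big(\frac{1}{2\sqrt2}B(\frac34,\frac12)\big)^2=\lambda_*$, the latter value not being attained. Thus $P\colon[0,\infty)\to(\lambda_*,1]$ is a continuous, strictly decreasing bijection.

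The first two assertions then drop out. For $\alpha\geq0$ the equation $\theta_{\lambda,\alpha}=\pi/2$ is solvable exactly when $\lambda$ belongs to the range $(\lambda_*,1]$ of $P$; this gives both $\theta_{\lambda,\alpha}\neq\pi/2$ for $\lambda\notin(\lambda_*,1]$ and, for $\lambda\in(\lambda_*,1]$, a unique root $\alpha(\lambda):=P^{-1}(\lambda)$. Being the inverse of a strictly decreasing bijection, $\alpha(\cdot)$ is itself bijective and strictly decreasing. Smoothness on the open interval $(\lambda_*,1)$ follows from the inverse function theorem, since $P'(\alpha)=\frac{8}{\pi^2}\,\theta_{1,\alpha}\,\partial_\alpha\theta_{1,\alpha}<0$ for every $\alpha>0$, the strict negativity being precisely the derivative estimate obtained in the proof of Lemma~\ref{L:mon}.

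The hard part is smoothness at the right endpoint $\lambda=1$, where $\alpha(1)=0$. Here the inverse function theorem breaks down: since $\partial_\alpha\beta=-\alpha(1+\alpha^2)^{-3/2}$ vanishes at $\alpha=0$, the $\alpha$-derivative of the integrand of $\theta_{1,\alpha}$ vanishes there as well, so $P'(0)=0$. A Taylor expansion gives $P(\alpha)=1-c\alpha^2+O(\alpha^4)$ with $c>0$, whence $\alpha(\lambda)\sim\sqrt{(1-\lambda)/c}$ as $\lambda\nearrow1$; consequently $\alpha(\cdot)$ is continuous but has a vertical tangent at $\lambda=1$. The natural smooth parameter of the branch is therefore the slope $\alpha$ itself, in which $\lambda$ depends quadratically (matching the $\e^2$-law of Theorem~\ref{T:P3}), rather than $\lambda$. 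Accordingly I would either restrict the smoothness claim to the open interval $(\lambda_*,1)$ — which is all that Section~\ref{sec:odd} needs, as Theorem~\ref{T:main} parametrises over the open range $\gamma\in(\ov\gamma_1,\gamma_*)$ — or state the behaviour at $\lambda=1$ merely as continuity. Pinning down this endpoint scaling is the only genuinely delicate step; everything else is immediate from Lemma~\ref{L:mon}.
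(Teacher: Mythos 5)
Your proof is correct, and for the core claims it travels the same road as the paper: everything is reduced to the monotonicity and the two limits \eqref{eq:lim} of Lemma~\ref{L:mon}, with an inverse/implicit function theorem supplying smoothness where $\alpha>0$. Your one structural difference---factoring the scaling $\theta_{\lambda,\alpha}=\theta_{1,\alpha}/\sqrt{\lambda}$ out of \eqref{eq:tet} and inverting the single function $P(\alpha)=\tfrac{4}{\pi^2}\theta_{1,\alpha}^2$---is a clean repackaging of what the paper does with the two-variable implicit function theorem (the paper solves $\theta_{\lambda,\alpha}=\pi/2$ directly, using $\partial_\alpha\theta_{\lambda,\alpha}<0$ for $\alpha>0$, and then reads off $\alpha'(\lambda)<0$ from the signs of $\partial_\lambda\theta_{\lambda,\alpha}$ and $\partial_\alpha\theta_{\lambda,\alpha}$).

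Where you genuinely depart from the paper is at the endpoint $\lambda=1$, and there you are right and the paper is imprecise. Since $\partial\beta/\partial\alpha=-\alpha(1+\alpha^2)^{-3/2}$ vanishes at $\alpha=0$, one has $\partial_\alpha\theta_{\lambda,\alpha}=0$ at $\alpha=0$, so the implicit-function-theorem step---in the paper's proof just as in yours---applies only for $\alpha>0$, i.e.\ it yields smoothness of $\alpha(\cdot)$ only on the open interval $(\lambda_*,1)$, whereas the lemma asserts it on $(\lambda_*,1]$. Your expansion $P(\alpha)=1-c\,\alpha^2+O(\alpha^4)$ is correct with $c>0$; in fact $c=\tfrac38$, computable from $\partial_\beta\theta_{1,\cdot}\big|_{\beta=1}=3\pi/16$, and it matches the coefficient $\tfrac{3g(\varrho_+-\varrho_-)}{8}$ in Theorem~\ref{T:P3}, as you anticipated. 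Hence $\alpha(\lambda)\sim\sqrt{\tfrac83(1-\lambda)}$ as $\lambda\nearrow1$, so $\alpha'(\lambda)\to-\infty$ and the map is continuous but not smooth (not even $C^1$) up to $\lambda=1$, contradicting the literal wording of Lemma~\ref{L:24}. Your proposed repair---smoothness on $(\lambda_*,1)$ together with continuity at $\lambda=1$---is the correct statement, and, as you observe, it is all the paper ever uses: Theorem~\ref{T:main} asserts smoothness of $(\gamma,x)\mapsto f_\gamma(x)$ only over the open range $\gamma\in(\ov\gamma_1,\gamma_*)$, and near the bifurcation point the branch is smooth in the amplitude parameter (as in Theorem~\ref{T:P3}) rather than in $\gamma$. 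In short: your argument proves everything the paper's proof proves, and in addition correctly diagnoses the endpoint defect that the paper glosses over.
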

\begin{proof} In view of Lemma~\ref{L:mon}, we have that $\theta_{\lambda,\alpha}=\pi/2$ if and only if 
\[
\frac{1}{2\sqrt{2\lambda}} B\left(\textstyle{\frac{3}{4},\frac{1}{2}}\right)<\frac{\pi}{2}\leq \frac{\pi}{2\sqrt{\lambda}},
\]
which is equivalent to that $\lambda\in(\lambda_*,1]$. Since $[(\lambda,\alpha)\mapsto\theta_{\lambda,\alpha}]$ is smooth and $\partial_\alpha \theta_{\lambda,\alpha} < 0$ for $\alpha > 0$, we infer from the implicit function theorem that
$[\lambda \mapsto \alpha(\lambda)]$ is smooth as well. Then
\[
0 = \frac{d}{d\lambda} \theta_{\lambda,\alpha(\lambda)} = \partial_\lambda \theta_{\lambda,\alpha} + \left(\partial_\alpha \theta_{\lambda,\alpha}\right)  \alpha^\prime(\lambda),
\]
so that $\alpha^\prime(\lambda) < 0$ for $\lambda \in (\lambda_*,1)$ in view of Lemma~\ref{L:mon}. From \eqref{eq:lim} we infer that $\alpha(1)=0$ and $\lim_{\lambda \to \lambda_*} \alpha(\lambda) = \infty$.
\end{proof}

With these preparations done, we come to the proof of the main result as stated in Theorem \ref{T:main}:

%%% PROOF OF THEOREM T:MAIN %%%
\begin{proof}[Proof of Theorem \ref{T:main}] 
It follows from Proposition~\ref{P:eau} and Lemma~\ref{L:24} that the odd solutions of \eqref{eq:m} of minimal period $2\pi$ coincide with the set
\[
\{(\lambda, \pm f_\lambda)\,:\, \lambda \in (\lambda_*,1] \},
\]
where we simply write $f_\lambda := f_{\lambda,\alpha(\lambda)}$. In order for those solutions to be physically realistic, we still have to require that $f_\lambda\in\U$, i.e. that $\|f\|_{C(\s)} < h$. The maximum of $|f_\lambda|$ is achieved at $x=\theta_{\lambda,\alpha(\lambda)}=\pi/2$, and we infer from \eqref{eq:u2} that $\lambda \in (\lambda_*,1]$ must additionally satisfy
\begin{equation}\label{eq:cen}
f_\lambda(\pi/2)=\sqrt{2\lambda^{-1}\left(1-(1+\alpha^2(\lambda))^{-1/2}\right)}<h.
\end{equation}

Let us first assume that $h^2 < 2\lambda_{*}^{-1}$. Since, in view of Lemma~\ref{L:24},
\[
2\lambda^{-1} \left(1-(1+\alpha^2(\lambda))^{-1/2}\right)\; \nearrow_{\lambda\to\lambda_*}\; 2\lambda_{*}^{-1}
\]
we find a unique $\lambda_h>\lambda_*$ with the property that 
\[
\max \left|f_{\lambda_h}\right| = \sqrt{2\lambda_h^{-1} \left(1-(1+\alpha^2(\lambda_h))^{-1/2}\right)}=h.
\]
By recalling that $\lambda = g(\varrho_+ - \varrho_-) / \gamma$, we infer the main part of the theorem, including $(i)$.

If instead $h^2 = 2\lambda_{*}^{-1}$, then $f_\lambda \in \U$ for all $\lambda \in (\lambda_*,1]$ and Lemma~\ref{L:24} implies that
\[
|f_\lambda| \nearrow_{\lambda\to\lambda_*} \sqrt{2 / \lambda_*}, \quad\text{ while }\quad f_\lambda'(0)=\alpha(\lambda)\nearrow _{\lambda\to\lambda_*}\infty.
\]
We have thus shown that $(ii)$ is valid, and assertion $(iii)$ follows similarly.
\end{proof}

\begin{rem} Differentiating  relation \eqref{eq:u1} with respect to $\lambda$ and using a maximum principle argument,  shows that
$f_{\lambda_1}> f_{\lambda_2}$ on $(0,\pi)$ provided $\lambda_*<\lambda_1<\lambda_2\leq1.$
The evolution of the solution  $f_\lambda$ with respect to $\lambda\in(\lambda_*,1]$ is pictured in Figure \ref{Figure3}.
\end{rem}

\begin{figure}
\includegraphics[clip=true, angle=0,  trim = 0 -10 0 0, width=0.4\linewidth]{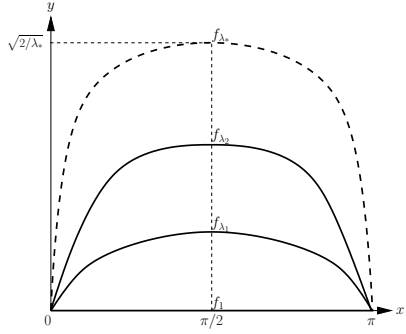}
\caption{\small Steady-states $f_\lambda$ on $\Sigma_1$, $\lambda_*<\lambda_2<\lambda_1<1.$}
\label{Figure3}
\end{figure}

%%% PROOF OF COROLLARY C:1 %%%
\begin{proof}[Proof of Corollary~\ref{C:1}] 
The constant $\gamma_{l,h}$ is defined similarly to $\gamma_h$, and ensures that 
$l^{-1}f_\gamma(l\cdot)$ remain in $\U$ for all $\gamma\in[\ov\gamma_1,\gamma_{l,h})$ (see the proof of Theorem \ref{T:main}).
If $l$ is large enough relation \eqref{eq:u2} shows  that $\gamma_{l,h}=\gamma_*.$
 The relation \eqref{union} now follows in view of that the global bifurcation branch $\Sigma_l$ consists exactly of functions with minimal period $2\pi/l$.
\end{proof}

%%%%%%%%%%%%%%%%%%%%
%%% SECTION; DESCRIPTION %%%
%%%%%%%%%%%%%%%%%%%%
\section{Description of the bifurcation branches $\cC_l$}

Let us return to the setting of Theorem~\ref{T:P3}. Define
\[
\widetilde\Sigma_l:=\{(\gamma, f_\gamma(\cdot+\pi/2l))\,:\, (\gamma, f_\gamma)\in\Sigma_l\} \cup\{(\ov\gamma_l,0)\}, \qquad 1 \leq l \in \N.
\]
Since the functions $f_\gamma$ are odd (cf. \eqref{eq:fdef}), the smooth curve $\widetilde\Sigma_l$ consists of even functions. 
Hence, it must be a subset of the maximal connected component of $\cS$ to which $(\ov\gamma_l,0)$ belongs,  i.e. $\widetilde\Sigma_l\subset\cC_l$. 
We shall prove that the converse is also true. 
This means that the branches $\cC_l$ consist, with the exception of the trivial solution  $(\ov\gamma_l,0)$,  exactly of even functions with minimal period $2\pi/l$. 
Theorem~\ref{T:main} may be  then used to describe  the global bifurcation branches $\cC_l$. 

%%% THEOREM: MAIN2 %%%
\begin{thm}\label{T:main2}
Given $l\in\N, l\geq 1,$ we have that $\cC_l=\widetilde\Sigma_l$.
\end{thm}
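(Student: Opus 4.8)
The plan is to establish the nontrivial inclusion $\cC_l\subset\widetilde\Sigma_l$, the reverse having already been observed. The conceptually decisive first step is to note that \emph{every} solution in $\cC_l$ in fact solves the autonomous equation~\eqref{eq:m}. Indeed, if $(\gamma,f)\in(0,\infty)\times\W$ solves the first relation of~\eqref{eq:mean} with some constant $c$, then integrating that relation over $\s$ and using that $x\mapsto f'/\sqrt{1+f'^2}$ is periodic, together with the constraint $\int_\s f\,dx=0$, forces $c=0$. Hence, with $\lambda=g(\varrho_+-\varrho_-)/\gamma$ as in~\eqref{eq:lambda}, each element of $\cC_l$ is an even, nonconstant, $2\pi$-periodic solution of~\eqref{eq:m}.

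Next I would classify all such even solutions. Since~\eqref{eq:m} is autonomous and possesses the conserved quantity appearing in~\eqref{eq:u1}, every nonconstant periodic solution is a phase translate of one of the odd solutions $f_{\lambda,\alpha}$ furnished by Proposition~\ref{P:eau}. An even solution is symmetric about $x=0$, so the translation must carry an axis of symmetry of $f_{\lambda,\alpha}$, namely one of its extrema located at the odd multiples of $\theta_{\lambda,\alpha}$, to the origin; by uniqueness for the initial-value problem this identifies the even solution with $\pm f_{\lambda,\alpha}(\cdot+\theta_{\lambda,\alpha})$. Requiring $2\pi$-periodicity forces the minimal period $4\theta_{\lambda,\alpha}$ to equal $2\pi/l$ for some $l\in\N$, i.e.\ $\theta_{\lambda,\alpha}=\pi/2l$, which by Lemma~\ref{L:24} and Corollary~\ref{C:1} is exactly the condition describing $\Sigma_l$. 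Thus the set of nontrivial even solutions of~\eqref{eq:mean} in $\W$ equals the disjoint union $\bigsqcup_{l\ge1}\bigl(\widetilde\Sigma_l\setminus\{(\ov\gamma_l,0)\}\bigr)$, and therefore $\cS=\bigsqcup_{l\ge1}\widetilde\Sigma_l$ as a set.

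It then remains to show that each $\widetilde\Sigma_l$ is relatively open and closed in $\cS$. For closedness I would verify that the only limit point of $\widetilde\Sigma_l$ lying in $(0,\infty)\times\W$ is the bifurcation point $(\ov\gamma_l,0)$, which belongs to it by definition: at the far end of the branch Theorem~\ref{T:main} guarantees that the solution either touches the boundary, $\|f\|_{C(\s)}\to h$, or blows up in $C^1$, so the limit escapes $\W$ and cannot lie in $\cS$. For openness I would argue that no branch $\widetilde\Sigma_{l'}$ with $l'\neq l$ accumulates at a point of $\widetilde\Sigma_l$: at a nontrivial point the function has exactly $2l$ simple zeros per period, and this count is stable under $C^1$-convergence, so a convergent sequence of solutions of minimal period $2\pi/l'$ would force $l'=l$; at the bifurcation point $(\ov\gamma_l,0)$, where the count degenerates, the Crandall--Rabinowitz local uniqueness underlying Theorem~\ref{T:P3} shows that $\cS$ coincides with $\widetilde\Sigma_l$ in a whole neighbourhood. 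Being then a nonempty relatively clopen subset of the connected set $\cC_l\supset\widetilde\Sigma_l$, we conclude $\cC_l=\widetilde\Sigma_l$.

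I expect the main obstacle to be precisely this openness step, and within it the requirement that distinct bifurcation branches stay separated: one must combine the robustness of the simple-zero count under $C^1$-convergence away from the trivial solution with the appeal to local uniqueness exactly at $(\ov\gamma_l,0)$, where that count can no longer distinguish the branches.
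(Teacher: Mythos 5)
Your proposal is correct, and its skeleton matches the paper's: reduce \eqref{eq:mean} to the autonomous equation \eqref{eq:m}, identify the even solutions in $\W$ with translates of the odd solutions classified in Corollary~\ref{C:1}, and conclude by a connectedness argument. The genuine difference lies in how the constant is killed. The paper's Lemma~\ref{L:3} argues by contradiction: if $c\neq 0$, then $g:=f-c/\lambda$ is an even solution of \eqref{eq:m} with nonzero mean, and a reflection/uniqueness argument (formula \eqref{eq:even}) shows that any such periodic solution must in fact have mean zero, a contradiction. You instead observe that $f''/(1+f'^2)^{3/2}=\bigl(f'/\sqrt{1+f'^2}\,\bigr)'$ is an exact derivative, so integrating the first equation of \eqref{eq:mean} over $\s$ and using periodicity together with $\int_\s f\,dx=0$ forces $c=0$ in one line; this is simpler than the paper's route and completely rigorous. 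Your identification of the even solutions as $\pm f_{\lambda,\alpha}(\cdot+\theta_{\lambda,\alpha})$ (carrying a symmetry axis to the origin and invoking uniqueness for the initial-value problem) is the same mechanism as the paper's reflection formula, only phrased in phase-translation language. Finally, you spell out the topological step --- each $\widetilde\Sigma_l$ is relatively clopen in $\cS$, via stability of the count of $2l$ simple zeros under $C^1$-convergence at nontrivial points and Crandall--Rabinowitz local uniqueness at $(\ov\gamma_l,0)$ --- whereas the paper compresses this into the single assertion that the map onto $\cS$ is a disjoint bijection. Making this explicit is a genuine improvement in rigour, since a connected component need not coincide with a piece of an arbitrary disjoint decomposition unless the pieces are topologically separated; your argument supplies exactly that separation.
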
 

Since the even solutions  of \eqref{eq:mean} near $(\ov\gamma_l,0)$ lay either on the trivial curve $\Sigma$
or on $\cC_l, $  we conclude that  $\widetilde\Sigma_l$ and $\cC_l$ coincide in a small neighbourhood of $(\ov\gamma_l,0)$. Hence, at least in small neighbourhood of $(\ov\gamma_l,0)$, the (seemingly) arbitrary constant in \eqref{eq:mean} is zero. Even more holds:

%%% LEMMA L:3 %%%
\begin{lemma}\label{L:3} Let $(\gamma,f)\in(0,\infty)\times\W$ be a solution of \eqref{eq:mean}.
We then have
\[
\frac{f''}{(1+f'^2)^{3/2}} + \lambda f=0\quad\text{in $\s$}.
\]
Moreover, if $2\pi/l, l\geq 1,$ is the minimal period of $f,$ then
$f(\cdot-\pi/2l)$ is  an odd solution of \eqref{eq:mean}.
\end{lemma}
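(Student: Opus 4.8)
First I would dispose of the claim that the constant in \eqref{eq:mean} vanishes. By Remark~\ref{R:1} the solution $f$ is smooth, so that $\frac{f''}{(1+f'^2)^{3/2}}+\lambda f\equiv c$ on $\s$ for some constant $c$, with $\lambda$ given by \eqref{eq:lambda}. The key observation is that the curvature is an exact derivative, $\frac{f''}{(1+f'^2)^{3/2}}=\frac{d}{dx}\big(\frac{f'}{\sqrt{1+f'^2}}\big)$, whose antiderivative is smooth and periodic; integrating the equation over the entire circle $\s$ therefore annihilates this term, while the constraint $\int_\s f\,dx=0$ annihilates the linear one. Hence $c\int_\s dx=0$ and $c=0$, which is precisely the first displayed identity. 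Note that only periodicity and the zero-mean constraint enter here, not the evenness of $f$.

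For the second assertion, let $p:=2\pi/l$ be the minimal period of the nonconstant even solution $f$, and write the equation as the first-order system $(f,f')'=F(f,f')$ with $F(f,g)=(g,-\lambda f(1+g^2)^{3/2})$ as in Proposition~\ref{P:eau}. The decisive structural fact is that $F$ is odd, $F(-f,-g)=-F(f,g)$, so that negation $\xi\mapsto-\xi$ commutes with the flow $\phi$: whenever $x\mapsto(f(x),f'(x))$ is a trajectory, so is $x\mapsto(-f(x),-f'(x))$. The orbit of $f$ is thus a closed curve avoiding the origin (the unique equilibrium) and invariant under negation. Writing $P_0:=(f(0),f'(0))$, there is $\sigma\in(0,p)$ with $\phi_\sigma(P_0)=-P_0$; applying the symmetry gives $\phi_\sigma(-P_0)=-\phi_\sigma(P_0)=P_0$, hence $\phi_{2\sigma}(P_0)=P_0$, and minimality of $p$ forces $\sigma=p/2$. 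Propagating this along the whole trajectory via the group property then yields the half-period antisymmetry $f(x+\pi/l)=-f(x)$ for all $x\in\s$.

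It remains to combine this with evenness. For $g:=f(\cdot-\pi/(2l))$ I would compute, using $f(-x)=f(x)$ together with the antisymmetry evaluated at $x-\pi/(2l)$,
\[
g(-x)=f\big(-x-\tfrac{\pi}{2l}\big)=f\big(x+\tfrac{\pi}{2l}\big)=-f\big(x-\tfrac{\pi}{2l}\big)=-g(x),
\]
so $g$ is odd. Since the reduced equation is translation-invariant and $\int_\s g\,dx=\int_\s f\,dx=0$, the first part shows that $g$ is again a solution of \eqref{eq:mean}, which completes the argument.

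The one genuinely non-routine step is the half-period antisymmetry: one must show that the negation symmetry carries a phase point to its antipode in \emph{exactly} half the minimal period. The flow argument above is the cleanest route; an equivalent, more computational alternative is to identify $f$ with a translate of the explicit odd solution $f_{\lambda,\alpha}$ of \eqref{eq:ivp}. Matching $\max f$ fixes $\alpha$ through \eqref{eq:u2}, and uniqueness for \eqref{eq:ivp} then gives $f=\pm f_{\lambda,\alpha}(\cdot+\theta_{\lambda,\alpha})$ with $\theta_{\lambda,\alpha}=p/4=\pi/(2l)$, after which the symmetric prescription \eqref{eq:fdef} delivers both the antisymmetry and the oddness of the shift at once. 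I expect the sign bookkeeping---whether $x=0$ is a maximum or a minimum of $f$---to be the only point needing care, but replacing $f$ by $-f$ if necessary reduces everything to a single case.
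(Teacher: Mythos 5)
Your proof of the first assertion is correct, and it is both simpler and more general than the paper's. Integrating the equation over $\s$ kills the curvature term, which is the exact derivative of the periodic function $f'(1+f'^2)^{-1/2}$, while the constraint $\int_\s f\,dx=0$ kills the linear term, so $c=0$; only periodicity and the zero mean enter. The paper instead argues by contradiction: it sets $g:=f-c/\lambda$, which solves \eqref{eq:m} with nonzero mean, uses evenness of $f$ and uniqueness for the initial value problem to establish the reflection structure \eqref{eq:even} for $g$ about its first zero, and concludes that $g$ has zero integral mean after all, a contradiction. On this point your route is a genuine shortcut.

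On the second assertion your phase-plane argument expresses the same symmetry that the paper encodes in \eqref{eq:even}, but one step is not justified as written: from ``negation commutes with the flow'' you conclude that the orbit $\mathcal{O}$ of $(f,f')$ is \emph{itself} invariant under negation. Oddness of $F$ only gives that $-\mathcal{O}$ is \emph{some} closed orbit; it could a priori be a different one (for the odd planar field $(u,v)\mapsto(v,u-u^3)$, negation exchanges the cycles around the two equilibria $(\pm1,0)$). Here the conclusion is true, but it needs an extra input, and any of the following closes the gap in a few lines: (a) the first integral $(1+f'^2)^{-1/2}-\lambda f^2/2=\mathrm{const}$ (the translation-free form of \eqref{eq:u1}), whose relevant level sets are single closed curves invariant under $(u,v)\mapsto(-u,-v)$, so that $\mathcal{O}$ coincides with its negation; (b) reversibility plus data you already have: the system is reversible under $(u,v)\mapsto(u,-v)$, and since the orbit meets the fixed axis $\{v=0\}$ at $(f(0),0)$ it is invariant under this reflection; because $f$ has zero mean it vanishes somewhere, so $\mathcal{O}$ contains a point $Q=(0,v_0)$ with $v_0\neq0$ and hence also $-Q$, forcing $\mathcal{O}=-\mathcal{O}$; (c) the index/area argument your parenthesis ``the unique equilibrium'' hints at: both $\mathcal{O}$ and $-\mathcal{O}$ must enclose the origin, and if they were disjoint they would be nested, contradicting that $-\mathrm{id}$ preserves enclosed area. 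Everything after this step---the deduction $\sigma=p/2$, the half-period antisymmetry, and the computation showing that $f(\cdot-\pi/(2l))$ is odd and again solves \eqref{eq:mean}---is correct. Note finally that your ``computational alternative'', matching $f$ to $\pm f_{\lambda,\alpha}(\cdot+\theta_{\lambda,\alpha})$ via \eqref{eq:u2} and uniqueness for \eqref{eq:ivp} and reading $\theta_{\lambda,\alpha}=\pi/(2l)$ off the minimal period, is sound and is essentially the paper's own argument, so the lemma is fully proved by your fallback route even without repairing the flow argument.
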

\begin{proof}
Assume by contradiction that we would find a solution $(\gamma,f)\in(0,\infty)\times\W$ of \eqref{eq:mean} such that
\[
\frac{f''}{(1+f'^2)^{3/2}} + \lambda f = c\quad\text{in $\s$},
\]
with a constant $c\neq 0.$
Let $g:=f- c/\lambda.$
Then $g$ is an even solution of \eqref{eq:m}, but $g$ has no longer integral mean equal to $0$.
Since $g'(0)=0$, it must hold $g(0)\neq0.$
Otherwise, $g=0,$ meaning that $f= c/\lambda,$ which  contradicts $f\in\W$.

We may, without loss of generality, assume that $g(0)>0$.
There exists a positive time $T_c>0$ such that $g>0$ on $[0,T_c)$ and $g(T_c)=0.$
Indeed, if this is not the case,  we infer from \eqref{eq:m} that $g''<0.$
This is in contradiction with the fact that $g$ is periodic.
The function $g$ is a $4T_c-$periodic function on $\R,$ since
it must hold that
\begin{equation}\label{eq:even}
g(x)=\left\{
\begin{array}{rllll}
g(x),&& 0\leq x\leq T_c,\\[1ex]
-g(2T_c-x),&& T_c\leq x\leq 2T_c,\\[1ex]
-g(x-2T_c),&& 2T_c\leq x\leq 3T_c,\\[1ex]
g(4T_c-x),&& 3T_c\leq x\leq 4T_c.
\end{array}
\right.
\end{equation}
Moreover,  $g$ is $2\pi-$periodic, so that $T_c=\pi/(2k),$ for some $k\in\N.$ 
We conclude that $g$ has integral mean zero, 
which is in contradiction with  $f\in\W$ and $c\neq0.$
Thus, $f$ must solve \eqref{eq:m}.
The relation \eqref{eq:even}  then holds also for~$f$, provided that $T_c=\pi/2l$. This completes the proof.
\end{proof}
In virtue of Corollary \ref{C:1}, Proposition \ref{P:eau},  and the proof of Lemma \ref{L:3} we conclude:

\begin{rem}\label{R:2}The solutions of \eqref{eq:mean} are, up to translation, odd. 
Moreover, problem \eqref{eq:mean} has no solutions  $(\gamma,f)\in(0,\infty)\times \U$ if $\gamma>\gamma_h.$
\end{rem}

With this preparation done, the proof  of Theorem \ref{T:main2} is immediate.
\begin{proof}[Proof of Theorem \ref{T:main2}] Lemma \ref{L:3}
shows that the mapping
\[
\cup_{l=1}^\infty \left(\Sigma_l \cup\{(\ov\gamma_l,0)\}\right) \to \cS, \qquad \Sigma_l \cup\{(\ov\gamma_l,0)\}\ni(\gamma,f)\mapsto(\gamma, f(\cdot+\pi/2l)),
\]
is one-to-one and onto. 
Recall that  $\cS$ is the closure  of the set of nontrivial solutions of \eqref{eq:mean} in $(0,\infty)\times\W,$
and the union $\cup_{l=1}^\infty \left(\Sigma_l \cup\{(\ov\gamma_l,0)\}\right)$ is, in virtue of Corollary \ref{C:1}, disjoint.
The image of $\Sigma_l \cup\{(\ov\gamma_l,0)\}$ under this mapping is $\widetilde\Sigma_l,$ hence $\cC_l=\widetilde\Sigma_l$.
\end{proof}

\section*{Appendix}
It is clear form Proposition \ref{P:1} and  Corrolary \ref{C:1} that the $T-$periodic solutions of \eqref{eq:mean} are exactly the elements of  the set
\[
\mathcal{S}_T:=\left\{f\left(\frac{2\pi}{T}\cdot\right)\,:\, f\in\mathcal{S}_{2\pi}\right\}.
\]
The volume constrain in \eqref{eq:mean}  for $T-$periodic solutions reads as
$
\int_{0}^Tf\, dx=0.
$
In view of Remark  \ref{R:2} we show that each $T-$periodic  solution $f$
of \eqref{eq:mean} corresponds to a unique function $\theta$ describing the evolution of a mathematical pendulum (or the bending of an elastic rod):
\begin{equation}\label{eq:MP}
\theta''+\lambda\sin(\theta)=0,
\end{equation}
cf. also \cite{OS}.
We set $\lambda:=g/l$, with $l$ denoting the length of the pendulum. 
We refer to \cite{A, BT}
 for the deduction of \eqref{eq:MP}.
Herein, of   interest are only the solutions of \eqref{eq:MP} which satisfy 
\begin{equation}\label{eq:MPcond}
\text{ $|\theta|<\pi/2$ and  $\theta(0)=0$.}
\end{equation}
Particularly, solutions of \eqref{eq:MP}-\eqref{eq:MPcond} are odd. 
We now state:

\begin{thm} \label{T:correspondence}
There exists a one-to-one correspondence between the  even solutions $f$ of \eqref{eq:mean}
and the odd solutions $\theta$ of
\eqref{eq:MP}-\eqref{eq:MPcond}.

Given $s\in\R,$ $\theta(s)$ is the angle between the tangent to $\Gamma(f)$ at $z(s)$ and the $Ox$-axis, with $z:\R\to\R$
a parametrisation of  $\Gamma(f)$ by the arc length.  
\end{thm}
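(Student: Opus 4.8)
The plan is to read the pendulum equation \eqref{eq:MP} directly off the geometry of $\Gamma(f)$, identifying $\theta$ with the tangent angle along the arc-length parametrisation. The mechanism is the classical fact that the arc-length derivative of the tangent angle equals the signed curvature; since Lemma~\ref{L:3} guarantees that every even solution $f$ of \eqref{eq:mean} actually solves \eqref{eq:m}, its curvature obeys $\kappa_{\Gamma(f)}=-\lambda f$, and this is precisely what converts the curvature equation into \eqref{eq:MP}.

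First I would pass from $f$ to $\theta$. Let $z(s)=(x(s),y(s))$ parametrise $\Gamma(f)$ by arc length, normalised so that $z(0)$ lies above $x=0$ and $z'(0)$ points in the positive $x$-direction; writing $z'(s)=(\cos\theta(s),\sin\theta(s))$ defines the tangent angle. The identity $\theta'=\kappa_{\Gamma(f)}$ combined with $\kappa_{\Gamma(f)}=-\lambda f=-\lambda y$ gives $\theta'=-\lambda y$, and differentiating while using $y'=\sin\theta$ produces $\theta''=-\lambda\sin\theta$, which is \eqref{eq:MP}. Because $f$ is even its derivative is odd, so $f'(0)=0$ and hence $\theta(0)=0$; and since $f\in C^{3+\alpha}(\s)$ has finite slope everywhere, $\theta=\arctan(f'\circ x)\in(-\pi/2,\pi/2)$, which is \eqref{eq:MPcond}. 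The reflection symmetry $z(-s)=(-x(s),y(s))$ forced by the evenness of $f$ gives $z'(-s)=(\cos\theta(s),-\sin\theta(s))$, that is $\theta(-s)=-\theta(s)$, so $\theta$ is odd.

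For the converse I would reconstruct $f$ from an odd solution $\theta$ of \eqref{eq:MP}--\eqref{eq:MPcond} by setting $x(s):=\int_0^s\cos\theta\,d\sigma$ and $y(s):=-\theta'(s)/\lambda$. The hypothesis $|\theta|<\pi/2$ makes $\cos\theta>0$, so $x$ is a strictly increasing diffeomorphism onto its image and $s\mapsto(x(s),y(s))$ is the graph of a function $f$ with $f(x(s))=y(s)$. Here $y'=-\theta''/\lambda=\sin\theta$ by \eqref{eq:MP}, so $z'=(\cos\theta,\sin\theta)$ is a genuine arc-length parametrisation, whence $\kappa_{\Gamma(f)}=\theta'=-\lambda y=-\lambda f$, i.e. \eqref{eq:m} holds. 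Oddness of $\theta$ makes $x$ odd and $\theta'$ (hence $y$) even, so $f$ is even. The mean-zero constraint in \eqref{eq:mean} is then automatic: over one spatial period, $\int f\,dx=\int(-\theta'/\lambda)\cos\theta\,ds=-\lambda^{-1}\int\cos\theta\,d\theta$, which vanishes because $|\theta|<\pi/2$ precludes full rotations and forces $\theta$ to return to its initial value. These two constructions are mutually inverse, yielding the bijection; the trivial solution $f\equiv0$ corresponds to $\theta\equiv0$.

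I expect the genuine difficulty to lie in the reverse construction, namely verifying that the reconstructed curve is globally a single-valued, periodic, even graph satisfying the volume constraint, rather than merely a locally defined arc. The condition $|\theta|<\pi/2$ is exactly what secures this, keeping the tangent nonvertical with a horizontal tangent only at the symmetry point $x=0$; the remaining subtlety is to match the arc-length period of $z$ to the spatial period $T$ so that the periodicity of $\theta$ translates into $\int_0^T f\,dx=0$. The curvature identity, the parity bookkeeping, and the elementary integrations defining $x$ and $y$ are routine by comparison.
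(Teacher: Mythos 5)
Your proposal is correct and follows essentially the same route as the paper: both directions hinge on identifying $\theta$ with the tangent angle along the arc-length parametrisation of $\Gamma(f)$ (your curvature identity $\theta'=\kappa_{\Gamma(f)}=-\lambda f$ is exactly the paper's computation $\theta'(s)=-\lambda f(z(s))$), and your reconstruction $x(s)=\int_0^s\cos\theta\,d\sigma$, $y=-\theta'/\lambda$ coincides with the paper's formula $f(x)=-\theta'(0)/\lambda+\int_0^x\tan(\theta(p(t)))\,dt$. If anything, your converse is more complete than the paper's, which leaves the verification of periodicity, evenness, and the mean-zero constraint as ``easily verified.''
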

\begin{proof} Take first $f$ to be an even solution of \eqref{eq:mean}.
We define the function $p:\R\to\R$ by
\[
 p(x)=\int_{0}^x\sqrt{1+f'^2(t)}\, dt, \qquad x\in\R.
\]
This mapping   is bijective, and let $z$ denote its inverse.
Let $\theta:\R\to\R$ be given by 
\begin{equation}\label{eq:A4}
 \theta(s)=\arctan f'(z(s)),\qquad s\in\R.
\end{equation}
If $L=p(T),$ then $\theta$ is $L-$periodic.
Indeed, we have that
\[
L=p(z(s+L))-p(z(s))=\int_{z(s)}^{z(s+L)}\sqrt{1+f'^2(t)}\, dt=\int_{z(s)}^{z(s)+T}\sqrt{1+f'^2(t)}\, dt,
\]
hence $z(s+L)=z(s)+T.$
Being a composition of odd functions, $\theta$ is also odd.
Given $s\in\R$,
\[
\theta'(s)=\frac{f''(z(s))}{1+f'^2(z(s))}z'(s)=-\lambda f(z(s)) z'(s) \sqrt{1+f'^2(z(s))}=-\lambda f(z(s)),
\]
due to
\begin{equation}\label{eq:A3}
z'(s)=\frac{1}{p'(z(s))}=\frac{1}{(1+f'^2(z(s)))^{1/2}}.
\end{equation}

Hence
\[
\theta''(s)=\lambda f'(z(s))z'(s)=-\lambda \frac{f'(z(s))}{\sqrt{1+f'^2(z(s))}}=-\lambda \sin(\theta(s)),
\]
and $\theta$ is a $L-$periodic  solution of \eqref{eq:MP}-\eqref{eq:MPcond}.

Conversely, given $\theta$ solution of \eqref{eq:MP}-\eqref{eq:MPcond}, we let $z:\R\to\R$ be the function defined by
\[
z(s):=\int_0^s\cos(\theta(t))\, dt, \qquad s\in\R,
\]
and write $p$ for its  inverse. 
Setting $ T:=\int_0^L\cos(\theta(t))\, dt,$
we have that $p(x+T)=L+p(x)$ for all $x\in\R.$
It can be then easily verified that $f:\R\to\R,$
\[
 f(x):=-\frac{\theta'(0)}{\lambda}+\int_0^{x}\tan(\theta(p(t)))\, dt, \qquad x\in\R,
\]
is a $T-$periodic and even solution of \eqref{eq:mean}.
\end{proof}

With this observation, our result stated in Theorem \ref{T:main} rewrites for the mathematical pendulum equation as follows:
\begin{cor}\label{C:3}
There exists a smooth curve $\theta_\lambda,$ $\lambda\in(\lambda_*,1],$ consisting of $L_\lambda-$periodic solutions  of \eqref{eq:MP}-\eqref{eq:MPcond}
with the property that
\[
\sup |\theta|=\arctan(\alpha(\lambda)))\nearrow _{\lambda\to\lambda_*} \pi/2.
\]
\end{cor}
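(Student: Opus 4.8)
The plan is to feed the one-parameter family of even steady states coming from Theorem~\ref{T:main} into the correspondence of Theorem~\ref{T:correspondence}, and then to read off the supremum of the resulting pendulum angle. Concretely, for $\lambda\in(\lambda_*,1]$ let $f_\lambda:=f_{\lambda,\alpha(\lambda)}$ be the odd solution of \eqref{eq:m} of minimal period $2\pi$ provided by Theorem~\ref{T:main}, and set $\widetilde f_\lambda:=f_\lambda(\cdot+\pi/2)$. By Lemma~\ref{L:3} the function $\widetilde f_\lambda$ is even, and the shift is chosen precisely so that $\widetilde f_\lambda'(0)=f_\lambda'(\pi/2)=0$; feeding $\widetilde f_\lambda$ into the construction in the proof of Theorem~\ref{T:correspondence} then yields an odd solution $\theta_\lambda$ of \eqref{eq:MP}--\eqref{eq:MPcond}, of period $L_\lambda=\int_0^{2\pi}\sqrt{1+(\widetilde f_\lambda')^2}\,dx$ equal to the arc length over one full period of the graph.

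The heart of the matter is the identity $\sup|\theta_\lambda|=\arctan(\alpha(\lambda))$. By \eqref{eq:A4} we have $\theta_\lambda(s)=\arctan\widetilde f_\lambda'(z(s))$, where $z\colon\R\to\R$ is the surjective arc-length inverse; since $\arctan$ is increasing this gives $\sup_s|\theta_\lambda(s)|=\arctan\|\widetilde f_\lambda'\|_{C(\s)}$. Translation leaves the supremum of the derivative unchanged, so $\|\widetilde f_\lambda'\|_{C(\s)}=\|f_\lambda'\|_{C(\s)}$, and Theorem~\ref{T:main} records that $\|f_\lambda'\|_{C(\s)}=f_\lambda'(0)=\alpha(\lambda)$. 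This yields the claimed identity, and Lemma~\ref{L:24} then shows that $\alpha(\lambda)\nearrow\infty$ as $\lambda\searrow\lambda_*$, whence $\arctan(\alpha(\lambda))\nearrow\pi/2$ as asserted.

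It remains to verify that $\lambda\mapsto\theta_\lambda$ is smooth. Theorem~\ref{T:main} states that $(\gamma,x)\mapsto f_\gamma(x)$ is smooth, and since $\lambda$ depends smoothly on $\gamma$ through \eqref{eq:lambda}, so do $(\lambda,x)\mapsto\widetilde f_\lambda'(x)$ and the arc-length map $(\lambda,x)\mapsto p_\lambda(x):=\int_0^x\sqrt{1+(\widetilde f_\lambda')^2}\,dt$. Because $\partial_x p_\lambda=\sqrt{1+(\widetilde f_\lambda')^2}>0$, the implicit function theorem furnishes a jointly smooth inverse $(\lambda,s)\mapsto z_\lambda(s)$, so that $\theta_\lambda(s)=\arctan\widetilde f_\lambda'(z_\lambda(s))$ depends smoothly on $(\lambda,s)$ and the period $L_\lambda=p_\lambda(2\pi)$ is smooth in $\lambda$. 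I expect the only mildly delicate point to be this joint-smoothness bookkeeping for the arc-length reparametrisation; once the positivity of $\partial_x p_\lambda$ is in hand it is routine, and the substantive content of the corollary is the identification $\sup|\theta_\lambda|=\arctan(\alpha(\lambda))$ together with the blow-up of $\alpha$ near $\lambda_*$ supplied by Lemma~\ref{L:24}.
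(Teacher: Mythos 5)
Your proof is correct and takes essentially the same route the paper intends: Corollary~\ref{C:3} is presented there as an immediate rewriting of Theorem~\ref{T:main} through the correspondence of Theorem~\ref{T:correspondence}, and that is exactly what you carry out, with the key identity $\sup|\theta_\lambda|=\arctan(\alpha(\lambda))$ justified via \eqref{eq:u1} (which gives $\|f_\lambda'\|_{C(\s)}=f_\lambda'(0)=\alpha(\lambda)$) and the blow-up supplied by Lemma~\ref{L:24}. One small attribution point: the evenness of $\widetilde f_\lambda=f_\lambda(\cdot+\pi/2)$ follows from the reflection symmetry encoded in \eqref{eq:fdef} (this is how the paper argues when defining $\widetilde\Sigma_l$), not from Lemma~\ref{L:3}, which states the converse (even-to-odd) direction.
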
 

\begin{rem}\label{R:dec}
It is worth noticing that the period $L_\lambda$ of these solutions is strictly decreasing with respect to $\lambda.$
Indeed, it holds that
\[
L_\lambda=\frac{2}{\sqrt{\lambda}}\int_{-\pi/2}^{\pi/2}\frac{d\theta}{\sqrt{1-\sin^2(\arctan(\alpha(\lambda))/2)\sin^2(\theta)}},
\]
and since $\alpha$ decreases with respect to $\lambda$ we obtain the desired conclusion.
 Though $L_\lambda$ can be calculated in terms of  elliptic integrals, it is in general difficult  to specify for which solution $\theta$ of \eqref{eq:MP}-\eqref{eq:MPcond} of period $L>0$ it holds that  $z(L)=2\pi,$
 that is the corresponding solution of \eqref{eq:mean} has period $2\pi.$
Furthermore, the result stated in Corollary \ref{C:3} can not be obtained via standard bifurcation theorems, since the period of $L_\lambda$ must 
decrease with respect to $\lambda.$ 
These facts serve as a motivation for our approach.
\end{rem}

\subsection*{Acknowledgement}
The authors are grateful to the anonymous referee for pointing out some ambiguity in the notation of the preliminary version of the paper.

\end{document}